\theoremstyle{plain}
\newtheorem{theorem}[subsection]{Theorem}
\newtheorem{proposition}[subsection]{Proposition}
\theoremstyle{definition}
\newtheorem{remark}[subsection]{Remark}
\newtheorem{definition}[subsection]{Definition}
\newtheorem{example}[subsection]{Example}
\newcommand{\Addresses}{{
  \bigskip
  \footnotesize
    $^{1}$  University ''Alexandru Ioan Cuza'', Faculty of Mathematics,
  Bd. Carol I, No. 11, Ia\c{s}i, 700506, ROMANIA,
  email:   croitoru@uaic.ro, Orcid ID: 0000-0001-8180-3590
\\
    	$^{2}$  Petroleum-Gas University of Ploie\c{s}ti, Department of Computer Science,
      Information Technology, Mathematics and Physics,
         Bd. Bucure\c{s}ti, No. 39, Ploie\c{s}ti 100680, ROMANIA\\
          email: emilia.iosif@upg-ploiesti.ro, Orcid ID: 0000-0003-0144-8811
\\
$^{3}$   Department of Mathematics and Computer Sciences,
     University of  Perugia -- 1, Via Vanvitelli - 06123, Perugia, ITALY,
     email: anna.sambucini@unipg.it, Orcid ID 0000-0003-0161-8729 	
}}
\title[Generalized decomposition  integral ...]{Generalized decomposition  integral of real functions with respect  to fuzzy measures}
 \subjclass[2020]{28B20, 28C15, 49J53}
 \keywords{Riemann-Lebesgue integral; Fuzzy measures; Non-additive set function.}
\author{ Anca Croitoru$^{1}$,   Alina Iosif$^{2}$,  and Anna Rita Sambucini$^{3}$}
\begin{document}
\date{\today}

%=======================

\maketitle

\textbf{Abstract}. In this paper we define a  type of generalized Riemann-Lebesgue (decomposition) 
integral for non-negative real functions with respect to two non-additive set functions. For this  integral we present some classical properties.

\section{Introduction}

Different types of integrals in non-additive or set-valued frameworks have been developed, 
motivated by challenges in diverse fields such as economics, game theory, fuzzy logic, and data mining (see \cite{SC2025,Torra,prec,MS2024,retina,CS2024,BC,BR2009,CS2015,C2018,K2016,K2021,K2024,cgis2022,ccgis,cms,ccgism,Gal,LWY,ar,ZMP0,ZMP,LK2013,LLWY2025,AMH2025} and the references therein). \\
In the literature, various generalizations of the classical Riemann and Lebesgue integrals are known. 
One notable extension, called the Riemann-Lebesgue integral, was introduced by Kadets and Tseytlin in 2000 \cite{KT} 
for vector-valued functions with respect to countably additive measures. Comparative studies between 
the Birkhoff integral and the Riemann-Lebesgue integral have been presented in \cite{Po}.\\
 More recently, in 2020, Candeloro et al. \cite{ccgis} explored some properties of the Riemann-Lebesgue integral within the non-additive setting.

In this paper, we propose a new type of decomposition integral based on  Riemann-Lebesgue integrability 
for nonnegative real-valued functions with respect to two non-additive set functions. Our approach is inspired by previous works such as
 \cite{Gal,ZMP0, ZMP,AMH2025,MAL2025,EL2014,LMOS}.
 The paper is organized as follows: Section 2 introduces the topic and  reviews some fundamental concepts.
 In Section 3, we define the generalized decomposition integral for nonnegative functions relative to two non-additive set functions, 
 and we examine key properties such as monotonicity with respect to the set, the integrand, and the set functions; homogeneity; 
 additivity, with respect to the set and the set functions, and transformation rule. Finally concluding remarks are presented.
\\
The development of this generalized integral may open new avenues for applications across various fields: 
for example, in 
Economics, this framework can be used to model preferences or utility functions in uncertain environments
 where traditional additive measures are insufficient. 
 The integral can be also applied to analyze strategies in cooperative or non-cooperative games where pay-offs or
  utility functions are non-additive, allowing for more flexible modeling of coalition formation and strategic interactions 
 or it can provide a tool for aggregating fuzzy data, leading to more accurate reasoning in fuzzy inference systems.
\\
Finally, 
the generalized non-additive integral can be a valuable tool in image processing and analysis, 
especially when dealing with  noisy data. In image reconstruction, for example, pixel intensities
 or features often come with uncertainty that traditional additive measures may not adequately capture. 
 By using this non-additive integral, it becomes possible to model and aggregate information from different regions or sources. 
 This approach can improve the robustness of image reconstruction, enhance noise reduction, 
 and better preserve important details, leading to higher quality and more reliable images, see \cite{ccgism,CS2024,retina}.

%=====================================
\section{Preliminaries}
Denote $\mathbb{N}^{\ast }=\{1,2,3,...\}$. Let   $\mathbb{R}_{0}^{+}=[0, \infty)$ and  $(X, \| \cdot\|)$ be a Banach space.
Suppose $S$ is a nonempty set, at least countable,  and $\mathcal{C}$ a $\sigma$-algebra of subsets of $S$.
For every nonempty set $A \subset S$, let $\mathcal{P}(A)$ be the family of all
subsets of $A$. 
As usual, let $A^c=S\setminus A$ and let $\chi_{A}$ be the characteristic function of $A$. 
 If $P$ and $P^{\prime }$ are two countable 
partitions of $S$, then $P^{\prime }$ is said to be \textit{finer than} $P$,
\begin{eqnarray}\label{minore-ouguale}
\phantom{a} \,\,\,\,  P\leq P^{\prime } \,\, \mbox{ (or  } P^{\prime }\geq P \mbox{), if every set of }
P^{\prime } \mbox{   is included in some set of    } P.
\end{eqnarray}
We will use the symbol $\mathscr{P}(A)$ to denote the family of all countable partitions of $A$ 
whose elements belong to $\mathcal{C}$; if $A= S$ we will use simply $\mathscr{P}$.

For a set function $m :\mathcal{C}\rightarrow \mathbb{R}_{0}^{+}$,
the usual definitions 
 as in \cite{ccgis,ccgism,Den} are considered. 
For the sake of the completeness, we recall some of them.
Throughout the paper we consider 
 set functions $m$ such that $m(\emptyset)= 0.$
\begin{definition}  \cite{Den}
Consider a  set function $m :\mathcal{C}\rightarrow \mathbb{R}_{0}^{+}$.
Let $B, C$ be arbitrary sets in $\mathcal{C}$. Then $m$ is called:
\begin{itemize}
\item[(i)] monotone if $m(B) \leq m(C)$,  when $B\subseteq C$;
\item[(ii)] fuzzy if $m$ is monotone and $m(\emptyset)=0;$
\item[(iii)] submodular if $m(B\cup C)+ m(B\cap C)\leq m(B)+ m(C)$;
\item[(iv)] additive if $m(B\cup C)= m(B)+ m(C)$, when $B\cap C= \emptyset$;
\item[(v)] subadditive if $m(B\cup C)\leq m(B)+ m(C)$;
\item[(vi)] superadditive if $m(B\cup C)\geq m(B)+ m(C)$.
\end{itemize}
\end{definition}

\begin{definition}  \rm (\hskip-.07cm\cite[Definition 4]{ccgis}) 
Let $m :\mathcal{C}\rightarrow \mathbb{R}_{0}^{+}$ be a set function.
\begin{itemize}
\item[(i)] The variation of $m$ is the set function $\overline{m}:\mathcal{P}(S)\rightarrow \lbrack 0,+\infty ]$
defined,  for
every $B\subset S$, as 
 $\overline{m}(B)=\sup \{\sum\limits_{i=1}^{n}m(B_{i})\}$, where the supremum is extended over all finite
families of pairwise disjoint sets $\{B_{i}\}_{i=1}^{n}\subset \mathcal{C}$,
with $B_{i}\subseteq B$, for every $i\in \{1,\ldots ,n\}$. 
\item[(ii)] $m$ is said to be \textit{of finite variation on }$\mathcal{C}$
if $\overline{m}(S)<\infty $.
\end{itemize}
\end{definition}

A property $(P)$ holds $m$-almost everywhere (denoted by
$m$-a.e.) if there exists $E \in \mathcal{C}$, with $m(E) = 0$, so that the property $(P)$ is valid on $S\setminus E.$

\begin{definition}\label{def-int}
%\mg{def-int}
 \rm (\hskip-.07cm \cite[Definition 5]{ccgis})
 Let $\nu :\mathcal{C}\rightarrow \mathbb{R}_{0}^{+}$ be a set function.
A vector function $g:S\rightarrow X$ is called absolutely
(unconditionaly respectively) $\nu$-Riemann-Lebesgue integrable (on $S$) 
 if there exists $a\in X$
such that for every $\varepsilon>0$, there exists
$\Pi_{\varepsilon}\in \mathscr{P}$, such that for every $  \Pi \in \mathscr{P}$, $\Pi=(E_{n})_{n\in \mathbb{N}}$, finer than  
$\Pi_{\varepsilon}$, ($\Pi \geq \Pi_{\varepsilon}$ in the sense of \eqref{minore-ouguale})
\begin{itemize}
\item $g$ is bounded on every $E_{n}$, with $\nu(E_{n})>0$ and
\item for every $s_{n}\in E_{n}$, $n\in \mathbb{N}$,
the series $\sum\limits_{n=0}^{+\infty}g(s_{n})\nu(E_{n})$ is absolutely
(unconditionaly respectively) convergent and
$$\left\|\sum_{n=0}^{+\infty}g(s_{n})\nu(E_{n})-a \right\|<\varepsilon.$$
\end{itemize}
\end{definition}
\begin{remark} \rm
We call $a={\scriptstyle ((A)RL)}\int_S gd\nu$ the Riemann-Lebesgue integral of $g$ (on $S$) with respect to $\nu$.
This integral was introduced first in \cite{KT, Po} in the countably additive case. 
Obviously if $a$ exists, then it is unique. According to  \cite{ccgis},
%$ARL(\nu, S)$ and $URL(\nu, S)$, namely 
the sets of all absolutely
(unconditionally respectively) Riemann-Lebesgue integrable functions on $S$,  are linear spaces.
 If $X$ is finite dimensional, the two classes coincide and we denote %$ARL(\nu, S)= URL(\nu, S)$
 with $RL(\nu, S)$ this class.
\end{remark}
Other properties of Riemann-Lebesgue integrable functions in non-additive case can be found in \cite{cgis2022,ccgis,ccgism, c2025}.
Note that, thanks to the definition, the  introduced integral which is a decomposition-type integral, 
does not need the measurability of integrands. It is also additive, while integrals such as Choquet, Pan, 
concave or Shilkret are generally only subadditives.
On the other hand not all characteristic functions are integrable in this sense and, those that are, 
do not have in general integral equal to the $\nu$ value of the set.\\
The  described definition  permits the integration of functions with respect to measures that are not 
necessarily additive, such as capacities or, more generally, fuzzy measures. 

\section{A Generalized Decomposition integral}
Originally,
as highlighted in the Introduction,
 the integrals with respect to non additive measures were applied in potential theory and statistical mechanics, 
 and they  have evolved into a valuable tool for addressing uncertainty within the frameworks of imprecise probability theory,
  decision theory, and the analysis of cooperative games with  applications extend to fields such as finance, economics, and insurance.\\
In order to motivate the generalized decomposition integral that will be considered in this section we recall, as an example, that 
a fundamental challenge in Mathematical Economics involves identifying equilibrium. In 
\cite{cms}
 a model was considered where the space of agents is partitioned into a large number of sections, 
 each representing an autonomous economic subsystem. Additionally, coalitions may form across 
 members of different sections according to specified rules.\\
  The mathematical framework employed was a product space  $X^* := X \times [0,1]$, 
  where each section corresponds to the set $ X \times \{ y\}$. $X$  denotes a typical section of agents
   and is equipped with a $\sigma$-algebra, while the interval $[0,1]$ was endowed with the 
   standard Lebesgue $\sigma$-algebra  $\mathcal{B}$  and the Lebesgue measure $\lambda$.
Within each section  the $\sigma$-algebra product was considered, along with a fuzzy measure  $\mu_y$  defined on it. \\

Following this idea we  introduced here an integral, based on the Riemann-Lebesgue integrability, 
 for functions defined in 
$S \times \mathbb{R}_0^+$. In this setting also $\mathbb{R}_0^+$ will be
 associated to a set-function not necessarily additive. Results of this type were also given in 
\cite{Gal, ZMP0, ZMP,AMH2025,MAL2025,LLWY2025}
 for the Choquet integral. \\

\begin{itemize}
\item[(H)]
So, let $(S, \mathcal{C})$, and $(\mathbb{R}_0^+, \mathcal{E})$ be two measurable spaces endowed with two $\sigma$-algebras and two 
set-functions
 $\mu:\mathcal{C}\rightarrow \mathbb{R}_0^+$
 and  $\nu:\mathcal{E}\rightarrow \mathbb{R}_0^+$,
 both vanishing on the empty set and such that $\{0\}\in \mathcal{E}$.\\
 \end{itemize}

 Let $\mathscr{F}(\mu)$ the family of all $\mu$-measurable functions $f: S \to \mathbb{R}_0^+$. \\
For every  $f \in \mathscr{F}(\mu)$
 and for every 
 $A\in \mathcal{C}$,   let
 $E_f^{\alpha}:= \{s \in S: f(s) \geq \alpha\}$ and  
  $u_{f, \mu}^{A}:\mathbb{R}_0^+\rightarrow \mathbb{R}_0^+$ be the function 
defined  by 
\begin{eqnarray}
\label{u-f}
u_{f, \mu}^{A}(\alpha)=\mu ( \{s\in A: \,  f(s)\geq \alpha\})
=\mu(E_f^{\alpha} \cap A), 
 \qquad \forall \, \alpha \geq 0.
\end{eqnarray}

\begin{definition}\label{def-CRL}
We say that a $\mu$-measurable function $f:S\rightarrow \mathbb{R}^+_0$ is $(\nu, \mu)$-integrable on 
$A\in \mathcal{C}$ if $u_{f, \mu}^{A}$ is $\nu$-RL integrable on $\mathbb{R}_0^+$.
In this case
\begin{eqnarray}\label{g-int}
\int^{*}_{A}f \, d(\nu, \mu) := 
{{\scriptstyle (RL)}}\int_{[0, \infty)} u_{f, \mu}^{A}\, d\nu =
{\scriptstyle(RL)}\int_{[0, \infty)} \mu(\{s \in A : f(s) \geq \alpha \}) \, d\nu(\alpha)
\end{eqnarray}
 is called the generalized decomposition integral of $f$ on $A$ with respect to $(\nu, \mu)$.
 \end{definition}
For every  $A \in \mathcal{C}$ let $G_{RL}(\nu, \mu, A)$ be the space of all $\mu$-measurable non-negative functions $f$
 such that the integral \eqref{g-int}, valued on $A$, is finite.
If $A=S$ we denote it with $G_{RL}(\nu, \mu)$.

\begin{remark}
\rm
If $\nu$ is the Lebesgue measure, then the above integral reduces to  the Choquet integral.
\end{remark}

\begin{example} \rm \phantom{a}
\begin{itemize}
\item If $\mu(A)=0$ for every $A\in \mathcal{C}$ and
 $\nu$ is of finite variation,   then 
 $\displaystyle{\int^{*}_{S}} f \, d(\nu, \mu)= 0.$
 \item Suppose that $\nu$ is RL-integrable, according to \cite[Definition 7]{cgis2022}.
 If $f$ is a constant function, $f(s) = c \in \mathbb{R}_0^+$, for every $s \in S$, then
 $f \in G_{RL}(\nu, \mu)$ and
 $\displaystyle{\int^{*}_{S}}f \, d(\nu, \mu)= \mu(S) \cdot \nu([0,c]).$
 \item
 Let $(\mathbb{R}_0^+, \mathcal{B}, \lambda) $ with 
$\mathcal{B}$ the Borel $\sigma$-algebra and $\lambda$ be the Lebesgue measure.
Let  $(S,\mathcal{C}, \mu) = ([0,1], \mathcal{B}([0,1]), \lambda^2)$.
 Let $f:[0,1]\to \mathbb{R}$ be defined by $f(x) = x$; $f$ is  measurable.
 In this case, for every $\alpha \geq 0$ and $A \in
\mathcal{B}([0,1])$,  
   it is 
\[
 u_{f, \mu}^{A}(\alpha)=\mu ( \{x\in A: \,  x\geq \alpha\})
=\lambda^2(\{x \in [0,1] : x \geq \alpha\}  \cap A) .
\]
 Now, let $ A = [0,1]$.
 Then 
$ u_{f, \mu}^{[0,1]}(\alpha)= (1-\alpha)^2 \chi_{[0,1]} $.
  In this case the integral coincides with the Riemann one and then  
$\displaystyle{\int^{*}_{[0,1]}} f \, d(\nu, \mu)= \int_0^1 (1-\alpha)^2 d\alpha = \frac{1}{3}.$ 
\item 
Let  $(S,\mathcal{C}, \mu) = (\mathbb{N}, \mathcal{P}(\mathbb{N}), \mu)$ with
 $\mu(\emptyset) =0$ and $\mu(A) =1$ otherwise.
 Let $f: \mathbb{N} \to \mathbb{R}$ defined by $f(n) = n$, as before $f$ is  measurable.\\
Let $(\mathbb{R}_0^+, \mathcal{B}, \nu) $ with 
$\mathcal{B}$ the Borel $\sigma$-algebra and 
$\nu(E) = 0$ if $E$ is a bounded set and $\nu(E) =1$ if
$E$ is unbounded.  
 In this case, for every $\alpha \geq 0$ and $A \in
\mathcal{P}(\mathbb{N})$,  
   it is 
\[
 u_{f, \mu}^{A}(\alpha)=\mu ( \{n\in A: \,  f(n)\geq \alpha\})
=\mu(\{n \in \mathbb{N} : n \geq \alpha\}  \cap A) = 1. 
\]
 Now, let $ A = \mathbb{N}$,  fix $\varepsilon > 0$ and consider $P_{\varepsilon}$ a countable partition of $\mathbb{R}_0^+$
 composed by bounded sets, for example $P_{\varepsilon} := \{ [n,n+1[, \, n \in \mathbb{N}\}$.
 Then for every partition $P =\{E_n, n \in \mathbb{N}\}  \geq P_{\varepsilon}$ we have
\[\sum_{n=0}^{+\infty} u_{f, \mu}^{\mathbb{N}}(\alpha)\nu(E_{n}) =\sum_{n=0}^{+\infty} \nu(E_{n}) = 0\]
 and then
$\displaystyle{\int^{*}_{\mathbb{N}}} f \, d(\nu, \mu)= 0.$
\end{itemize}
\end{example}

Troughout  the section we refere to $\mu$ defined on the measurable space
 $(S,\mathcal{C})$ and $\nu$ to $(\mathbb{R}_0^+, \mathcal{E}) $ and both satisfies conditions given in (H). 
 
\subsection*{A - Case:  $\nu$  of finite variation }
In this subsection properties of the $G_{RL} (\nu, \mu)$ integral are exposed when $\nu$ is of finite variation.
We start with a dominating result.
\begin{proposition}\label{P1}
Let    $f \in \mathscr{F}(\mu)$ be 
such that $u_{f, \mu}^{S}$ is bounded. 
Then $f \in G_{RL} (\nu, \mu)$-integrable and
\[
  \int^{*}_{S}f\, d(\nu, \mu)\leq \overline{\nu}(\mathbb{R}_0^+)\cdot \sup_{\alpha \geq 0}\, u_{f, \mu}^{S}(\alpha).
  \]
\end{proposition}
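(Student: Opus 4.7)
The plan is to verify that $u := u_{f,\mu}^{S}$ satisfies Definition \ref{def-int}, i.e.\ is $\nu$-Riemann--Lebesgue integrable on $\mathbb{R}_0^+$, and that the resulting integral $a$ is bounded above by $M\cdot\overline{\nu}(\mathbb{R}_0^+)$, where $M := \sup_{\alpha \geq 0} u(\alpha) < \infty$. Once this is done, Definition \ref{def-CRL} gives $\int^{*}_{S} f\,d(\nu,\mu) = a \leq M\cdot \overline{\nu}(\mathbb{R}_0^+)$, which is the desired inequality.

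The first step is a uniform absolute bound on the sample sums. For any $\Pi = (E_n)_{n\in\mathbb{N}} \in \mathscr{P}(\mathbb{R}_0^+)$ and any selection $s_n \in E_n$, the definition of variation yields $\sum_{n=0}^{N}\nu(E_n) \leq \overline{\nu}(\mathbb{R}_0^+)$ for every finite $N$, whence
\[
\sum_{n=0}^{\infty} u(s_n)\,\nu(E_n) \;\leq\; M \sum_{n=0}^{\infty}\nu(E_n) \;\leq\; M\,\overline{\nu}(\mathbb{R}_0^+) < \infty.
\]
Since all terms are nonnegative, the series converges absolutely, with a bound independent of both the partition and the choices; moreover $u$ is automatically bounded on every $E_n$, satisfying the structural requirement in Definition \ref{def-int}.

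The second and main step is to produce the limit $a$. I would proceed via a Cauchy-type criterion: construct, for each $\varepsilon>0$, a partition $\Pi_\varepsilon$ such that the sample sums over any two refinements $\Pi,\Pi'\geq \Pi_\varepsilon$ (with any choices of sample points) differ by less than $\varepsilon$. Two features of the setup drive this: first, $u$ is \emph{monotonically decreasing} in $\alpha$, because $\alpha\mapsto E_f^{\alpha}\cap S$ is decreasing in \eqref{u-f}, so the oscillation of $u$ on each piece $E_n$ can be controlled by placing finitely many level thresholds in $\Pi_\varepsilon$; second, the finite total mass $\overline{\nu}(\mathbb{R}_0^+)<\infty$ ensures that the tail $\sum_{n\geq N}\nu(E_n)$ can be made arbitrarily small uniformly in $\Pi$. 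Combining these two estimates yields the Cauchy property, and along a nested sequence of refinements the sample sums converge to some $a \in [0, M\,\overline{\nu}(\mathbb{R}_0^+)]$, which is then verified to satisfy the RL integrability condition for $u$.

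Finally, passing to the limit in the bound from Step 1 yields $a \leq M\,\overline{\nu}(\mathbb{R}_0^+)$ directly, concluding the proof. The main obstacle is the Cauchy step: the RL definition demands the convergence condition to hold \emph{simultaneously} over all refinements and all sample choices, which is stringent and genuinely relies on both the monotone structure of $u_{f,\mu}^{S}$ and the finiteness of $\overline{\nu}$; steps 1 and 3 are essentially bookkeeping around this core argument.
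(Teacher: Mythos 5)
Your Step 1 (the uniform bound on the sample sums) and the final passage to the limit are fine, but the core of your argument --- producing the limit $a$, i.e.\ proving that a bounded $u:=u^{S}_{f,\mu}$ is $\nu$-Riemann--Lebesgue integrable --- has a genuine gap. Note first that this is exactly the part the paper does not prove either: its proof is a one-line appeal to \cite[Proposition 1]{ccgis}, which already asserts that bounded functions are RL-integrable with respect to a set function of finite variation, with the integral dominated by $\sup u\cdot\overline{\nu}(\mathbb{R}_0^+)$. You are in effect trying to reprove that cited result from scratch, and the mechanism you propose does not work in the non-additive setting.

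Concretely: your Cauchy step compares the sample sums over two refinements of $\Pi_\varepsilon$ by controlling the oscillation of $u$ on each cell. To convert small oscillation into closeness of the two sums you must relate $\nu(E)$ to $\sum_i\nu(E_i')$ when a cell $E$ is split into $E=\bigcup_i E_i'$, i.e.\ you need additivity of $\nu$ --- precisely what is not assumed. Take for instance $\nu(E)=\lambda(E\cap[0,1])^2$, which has finite variation, and $u\equiv 1$: the sums over partitions finer than a fine $\Pi_\varepsilon$ all lie near $0$ (not near $\overline{\nu}(\mathbb{R}_0^+)$), and they converge because $\sum_n\nu(E_n)$ collapses under refinement, not because the oscillation of $u$ is controlled --- here it is identically zero. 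So an argument organised around the oscillation of $u$ cannot be the right one. Two further points: you invoke the monotonicity of $\alpha\mapsto u(\alpha)$, which requires $\mu$ to be monotone, an assumption not present in the proposition (only (H) is in force; fuzziness of $\mu$ appears only in the subsequent remark, to guarantee boundedness of $u$); and extracting $a$ ``along a nested sequence of refinements'' and then ``verifying'' the RL condition for all refinements and all sample choices is exactly the hard step of Definition \ref{def-int}, which you assert rather than prove. The honest options are to cite \cite[Proposition 1]{ccgis} as the paper does, or to reproduce its proof, whose structure is genuinely different from a Riemann-type oscillation argument.
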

\begin{proof}
Bounded and measurable functions $u_{f, \mu}^{S}$ are RL-integrable with respect to $\nu$ and 
the assertion follows by \cite[Proposition 1]{ccgis}.
\end{proof}
\begin{remark}\rm
From Proposition \ref{P1} it follows that 
$\mathscr{F}(\mu) \subset G_{RL} (\nu, \mu)$ 
when $\mu$ is fuzzy and $\nu$ is of finite variation, since
$\sup_{\alpha \in [0, \infty)}u_{f, \mu}^{S}(\alpha) \leq \mu(S) < +\infty.$
\end{remark}

The integrability of a function in a measurable set is obtained via the
integrability on the whole set $S$  of the product of the function with 
  the characteristic function of the measurable set by  Theorem \ref{Te}. \\

In the subsequent Theorems \ref{Te}, \ref{Te2} and \ref{0.a.e.} of this subsection we suppose also that  $\nu(\{0\})=0$.
\begin{theorem}\label{Te}
 Let 
 $ A\in \mathcal{C}$ be fixed and let
 $f \in \mathscr{F}(\mu) \cap G_{RL}(\nu, \mu, A)$.
  Then
\begin{equation*}
  \int^{*}_{A} f \, d(\nu, \mu)= \int^{*}_{S}f \chi_{A}\, d(\nu, \mu).
\end{equation*}
\end{theorem}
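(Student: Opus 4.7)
The plan is to reduce the claim to the pointwise identity $u_{f,\mu}^{A}(\alpha) = u_{f\chi_{A},\mu}^{S}(\alpha)$ for every $\alpha > 0$, and then to handle the only point of possible discrepancy, namely $\alpha = 0$, using the two structural ingredients already available: $\{0\} \in \mathcal{E}$ from hypothesis (H), and $\nu(\{0\}) = 0$ from the standing assumption of the subsection.

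First I would verify measurability of $f\chi_{A}$ (immediate from $A \in \mathcal{C}$ and $f \in \mathscr{F}(\mu)$) and set $g_1 := u_{f,\mu}^{A}$, $g_2 := u_{f\chi_{A},\mu}^{S}$. For $\alpha > 0$, any $s \notin A$ satisfies $(f\chi_{A})(s) = 0 < \alpha$, so the level set $\{s \in S : f(s)\chi_{A}(s) \geq \alpha\}$ collapses to $\{s \in A : f(s) \geq \alpha\}$, giving $g_1(\alpha) = g_2(\alpha)$. At $\alpha = 0$ instead $g_1(0) = \mu(A)$ while $g_2(0) = \mu(S)$, which may disagree; hence $g_1$ and $g_2$ coincide exactly on $\mathbb{R}_0^+ \setminus \{0\}$.

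Next I would unpack the $\nu$-RL integrability of $g_1$, writing $a := \int^{*}_{A} f\, d(\nu,\mu)$. Given $\varepsilon > 0$, Definition \ref{def-int} supplies a partition $\Pi_\varepsilon \in \mathscr{P}(\mathbb{R}_0^+)$; since $\{0\} \in \mathcal{E}$, I refine $\Pi_\varepsilon$, if necessary, into a partition $\Pi'_\varepsilon$ containing $\{0\}$ as one of its atoms. For every $\Pi = (E_n)_{n \in \mathbb{N}}$ finer than $\Pi'_\varepsilon$, the singleton $\{0\}$ remains as some cell $E_0$, while $0 \notin E_n$ for $n \geq 1$; any choice of tags then forces $\alpha_0 = 0$ and $\alpha_n > 0$ for $n \geq 1$. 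Using $\nu(\{0\}) = 0$, the $E_0$-term vanishes in both sums, and on all remaining cells the two integrands agree, so
\[
\sum_{n \in \mathbb{N}} g_2(\alpha_n)\,\nu(E_n) \;=\; \sum_{n \in \mathbb{N}} g_1(\alpha_n)\,\nu(E_n),
\]
and both lie within $\varepsilon$ of $a$. This simultaneously shows $f\chi_{A} \in G_{RL}(\nu,\mu)$ and that its integral equals $a$, yielding the desired equality.

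The main obstacle is precisely the isolated point $\alpha = 0$, where $g_1$ and $g_2$ genuinely differ; the argument above shows that the assumptions $\{0\} \in \mathcal{E}$ (so that a partition can separate $0$ from the rest) and $\nu(\{0\}) = 0$ (so that the separated piece contributes nothing to the RL-sum) are exactly what is needed to absorb this one-point mismatch, which is why both hypotheses enter as standing assumptions of this subsection rather than just of the theorem.
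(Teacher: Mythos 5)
Your proof is correct and follows essentially the same route as the paper: both arguments reduce the claim to the observation that $u_{f,\mu}^{A}$ and $u_{f\chi_{A},\mu}^{S}$ coincide on $(0,\infty)$ and differ only at the $\nu$-null point $\alpha=0$. The only difference is that you establish the resulting ``equal $\nu$-a.e.\ implies equal RL-integrals'' step by hand, refining partitions so that $\{0\}$ is an isolated cell, whereas the paper simply invokes \cite[Corollary 2]{ccgis} (after noting in addition that the difference of the two functions is bounded); your inline argument is a valid, self-contained substitute for that citation.
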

\begin{proof}
By definition
\begin{equation*}
  \int^{*}_{A}f\, d(\nu, \mu)= 
  {\scriptstyle (RL)}\int_{S}u_{f, \mu}^{A} (\alpha)\, d\nu (\alpha),
\end{equation*}
 where $u_{f, \mu}^{A}(\alpha)= \mu(E_{f}^{\alpha}\cap A)$ for every  $\alpha\geq 0$ and
 \begin{equation*}
 \int^{*}_{S}f\chi_{A}\, d(\nu, \mu)= {\scriptstyle (RL)}\int_{S}u_{f\chi_{A}, \mu}^{S}(\alpha) \, d\nu(\alpha),
 \end{equation*}
 where
\[
u_{f\chi_{A}, \mu}^{S} (\alpha)=
\begin{cases} 
\mu(S) & \text{ if } \alpha=0 \\
 \mu(E_{f}^{\alpha}\cap A)& \text{ if }  \alpha>0 .
 \end{cases} 
 \]
 Let us observe that 
 $u_{f, \mu}^{A}(\alpha)= u_{f\chi_{A}, \mu}^{S}(\alpha)$
 \, $\nu$-a.e. and
\[
\sup_{\alpha\in[0, \infty)}| u_{f, \mu}^{A}(\alpha)- u_{f\chi_{A}, \mu}^{S}(\alpha)|<+\infty.
\]
 So we apply \cite[Corollary 2]{ccgis} and  obtain
 \[
 \int^{*}_{A}f\, d(\nu, \mu):=
 {\scriptstyle (RL)}\int_{S}u_{f, \mu}^{A}(\alpha) \, d\nu(\alpha)=
  {\scriptstyle (RL)}\int_{S}u_{f\chi_{A}, \mu}^{S}(\alpha)\, d\nu(\alpha)
  := \int^{*}_{S}f \chi_{A}\, d(\nu, \mu),
  \]
  which concludes the proof.
\end{proof}
%=============================
 Moreover the integrability is hereditary on the measurable subsets, as  Theorem \ref{Te2} shows.
\begin{theorem}\label{Te2}
Suppose   $\mu:\mathcal{C}\rightarrow \mathbb{R}_0^+$ is bounded. If $f\in
\mathscr{F}(\mu) \cap  G_{RL}(\nu, \mu)$ then, for every $A\in \mathcal{C}$, 
$f\in G_{RL}(\nu, \mu, A)$ and
\begin{equation*}
  \int^{*}_{A}fd(\nu, \mu)= \int^{*}_{S}f\chi_{A}d(\nu, \mu).
\end{equation*}
\end{theorem}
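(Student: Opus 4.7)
The plan is to split the statement into two parts and handle them in order: first, establish that $f \in G_{RL}(\nu,\mu,A)$ for every $A \in \mathcal{C}$; second, derive the identity with $\int^*_S f\chi_A\,d(\nu,\mu)$ by invoking Theorem \ref{Te}. Since Theorem \ref{Te} already provides the identity as soon as $f \in \mathscr{F}(\mu) \cap G_{RL}(\nu,\mu,A)$ (and $\nu(\{0\})=0$, which is assumed in this subsection), the real work lies in the first part.

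To show integrability on $A$, I would study the function $u_{f,\mu}^{A}(\alpha) = \mu(E_f^{\alpha} \cap A)$. Because $f$ is $\mu$-measurable, $E_f^{\alpha} \in \mathcal{C}$ for every $\alpha \geq 0$, hence $E_f^{\alpha} \cap A \in \mathcal{C}$ and $u_{f,\mu}^{A}$ is well-defined. The boundedness assumption on $\mu$ yields
$$\sup_{\alpha \geq 0} u_{f,\mu}^{A}(\alpha) \leq \sup_{B \in \mathcal{C}} \mu(B) < +\infty,$$
so $u_{f,\mu}^{A}$ is a bounded function on $\mathbb{R}_0^+$. Since we are in Subsection A, $\nu$ has finite variation, so the very argument used in the proof of Proposition \ref{P1} (relying on \cite[Proposition 1]{ccgis}) applies verbatim: bounded functions are $\nu$-RL integrable on $\mathbb{R}_0^+$. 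In particular $u_{f,\mu}^{A}$ is $\nu$-RL integrable, which by Definition \ref{def-CRL} means $f \in G_{RL}(\nu,\mu,A)$.

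With $f \in \mathscr{F}(\mu) \cap G_{RL}(\nu,\mu,A)$ in hand, Theorem \ref{Te} directly gives
$$\int^*_A f\,d(\nu,\mu) = \int^*_S f\chi_A \,d(\nu,\mu),$$
completing the argument. I do not expect any serious obstacle here: the proof is essentially a localized reuse of Proposition \ref{P1} together with Theorem \ref{Te}, and the only point that deserves a check is that the estimate bounding $u_{f,\mu}^{A}$ transfers to arbitrary $A \in \mathcal{C}$ exactly as it did for $A = S$; this is immediate once $\mu$ is assumed bounded. The hypothesis $f \in G_{RL}(\nu,\mu)$ in the statement is thus used only implicitly, through the fact that it guarantees $f \in \mathscr{F}(\mu)$, which is what feeds into Theorem \ref{Te}.
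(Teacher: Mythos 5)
Your proposal is correct and follows essentially the same route as the paper: boundedness of $\mu$ makes $u_{f,\mu}^{A}$ bounded, so \cite[Proposition 1]{ccgis} (as in Proposition \ref{P1}) gives $u_{f,\mu}^{A}\in RL(\nu,S)$, i.e. $f\in G_{RL}(\nu,\mu,A)$, and Theorem \ref{Te} then yields the identity. Your added remarks --- that the finite-variation assumption on $\nu$ from this subsection is what makes the boundedness argument work, and that the hypothesis $f\in G_{RL}(\nu,\mu)$ is not really used beyond supplying $f\in\mathscr{F}(\mu)$ --- are accurate and consistent with the paper's argument.
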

\begin{proof}
Let $A\in \mathcal{C}$. 
According to Definition \ref{def-CRL}, we have to prove that the function 
$u_{f, \mu}^{A}(\alpha)= \mu(E_{f}^{\alpha}\cap A)$
 is Riemann-Lebesgue integrable with respect to $\nu$. 
% where $E_{f}^{\alpha}=\{s\in S; f(s)\geq \alpha \}$. 
 Since $\mu$ is bounded, then the function
$u_{f, \mu}^{A}$ is bounded too. 
According to \cite[Proposition 1]{ccgis} it results that 
$u_{f, \mu}^{A}\in RL(\nu, S).$ 
Now the conclusion follows by Theorem \ref{Te}.
\end{proof}
%==========================

%\marginpar{\tiny  $\mathcal{C}$ is a $\sigma$-algebra. If $f \in \mathcal{F}(\mu)$, then $\{s \in S : f(s) < 1/n \} \in \mathcal{C}$ and then $Z_f = \cap _n \{s \in S : f(s) < 1/n \}\in \mathcal{C}$. Maybe the red part could be omitted}
%============================================
Finally the integral of a function which is $\mu$-a.e. zero valued is null.
\begin{theorem}\label{0.a.e.}
Let  $\mu:\mathcal{C}\rightarrow \mathbb{R}_0^+$ be a fuzzy measure.
 Let $f \in \mathscr{F}(\mu)$ be  such that 
 %{\color{red} $f^{-1}(\{0\})\in \mathcal{C}$. If}
  $f=0$ $\mu$-a.e.. Then $f\in G_{RL}(\nu,\mu)$
and 
\[\int_{S}^{*}f\, d(\nu, \mu)=0.\]
\end{theorem}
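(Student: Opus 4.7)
The plan is to reduce the problem to showing that the level-set function $u_{f,\mu}^{S}$ is essentially the zero function from the point of view of $\nu$, and then invoke the integrability and equivalence tools already used earlier in this subsection.

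First I would unpack the hypothesis $f = 0$ $\mu$-a.e.: there exists $E \in \mathcal{C}$ with $\mu(E) = 0$ such that $f(s) = 0$ for all $s \in S \setminus E$. For any $\alpha > 0$, the super-level set $E_f^{\alpha} = \{s \in S : f(s) \geq \alpha\}$ is contained in $E$, since points outside $E$ give $f(s) = 0 < \alpha$. Because $\mu$ is a fuzzy measure (hence monotone), this yields $u_{f,\mu}^{S}(\alpha) = \mu(E_f^{\alpha}) \leq \mu(E) = 0$ for every $\alpha > 0$. At $\alpha = 0$, on the other hand, $E_f^{0} = S$ (since $f \geq 0$), so $u_{f,\mu}^{S}(0) = \mu(S) < +\infty$. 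Thus $u_{f,\mu}^{S}$ is a bounded, non-negative function on $\mathbb{R}_0^+$ that vanishes everywhere except possibly at the single point $\alpha = 0$.

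Next I would obtain the integrability statement. Since $u_{f,\mu}^{S}$ is bounded, Proposition \ref{P1} directly gives $f \in G_{RL}(\nu,\mu)$. (Equivalently, one may notice that a bounded measurable function is RL-integrable with respect to $\nu$ by \cite[Proposition 1]{ccgis}.)

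For the value of the integral, the key observation is that, by the standing hypothesis of this subsection, $\nu(\{0\}) = 0$. Therefore $u_{f,\mu}^{S}$ coincides $\nu$-a.e.\ with the identically zero function on $\mathbb{R}_0^+$, and
\[
\sup_{\alpha \in [0,\infty)} \bigl| u_{f,\mu}^{S}(\alpha) - 0 \bigr| = \mu(S) < +\infty.
\]
Applying \cite[Corollary 2]{ccgis} (exactly as in the proof of Theorem \ref{Te}), the RL integral of $u_{f,\mu}^{S}$ equals the RL integral of the zero function, which is $0$. Hence
\[
\int_{S}^{*} f \, d(\nu, \mu) = {\scriptstyle (RL)}\!\int_{\mathbb{R}_0^+} u_{f,\mu}^{S}(\alpha) \, d\nu(\alpha) = 0.
\]
The only delicate step is handling the single point $\alpha = 0$: because $u_{f,\mu}^{S}(0)$ can be as large as $\mu(S)$, one really needs the assumption $\nu(\{0\}) = 0$ to absorb this contribution, either via the almost-everywhere equivalence result or by explicitly choosing initial partitions of $\mathbb{R}_0^+$ that isolate $\{0\}$ as one of their atoms.
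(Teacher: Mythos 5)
Your argument is correct and follows essentially the same route as the paper: compute $u_{f,\mu}^{S}$ explicitly as $\mu(S)$ at $\alpha=0$ and $0$ for $\alpha>0$ via monotonicity of $\mu$, then use the standing assumption $\nu(\{0\})=0$ to conclude that $u_{f,\mu}^{S}$ is bounded and vanishes $\nu$-a.e., so its RL integral is $0$. The only cosmetic difference is that the paper invokes a single a.e.-vanishing result (Theorem 2 of the cited reference) to get integrability and the value at once, whereas you split the two steps between Proposition \ref{P1} and the a.e.-equivalence corollary; both are legitimate.
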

\begin{proof}
  Suppose $f=0$ $\mu$-a.e. and consider $B= \{s\in S; f(s)>0\}$. Then $\mu(B)=0$ and $f(s)=0$,  for every $s\in S\setminus B.$
  Let $\alpha \geq 0.$ \\
  If $\alpha=0$, we have
   $E_{f}^{0}=\{s\in S; f(s) {\color{blue} \geq } 0\}= S$
    and for every $\alpha>0$, the set 
  $E_{f}^{\alpha}=\{ s\in S; f(s)\geq \alpha\} \in \mathcal{C} \cap B$. 
  Since $\mu$ is monotone it results that $\mu(E_{f}^{\alpha})=0.$ Therefore,
\[
 u_{f, \mu}^{S}(\alpha)= 
\begin{cases} 
\mu(S) & \text{ for } \alpha=0 \\ 0 
& \text{ for } \alpha>0. 
\end{cases} 
\]
Since $\nu(\{0\})=0$, then 
$u_{f, \mu}^{S}(\alpha)=0$ $\nu$-a.e.  and it is bounded by $\mu(S)$. \\
By \cite[Theorem 2]{ccgis}, 
$u_{f, \mu}^{S}\in RL(\nu, S)$ and 
\[ 0= {\scriptstyle (RL)}\int_{S}u_{f, \mu}^{S}(\alpha)\, d\nu(\alpha)= 
\int_{S}^{*}f \, d(\nu, \mu).\]
\end{proof}
%=====================================
%===================================
\subsection*{B - Case: additional assumptions on $\mu$}
In this subsection we will introduce additional assumptions on the set function $\mu$ in order
 to obtain some other  properties of the $G_{RL}$ integral.
\\
Independently from Theorem \ref{0.a.e.} we can obtain a result of integrability when integrands are 
equal $\mu$-a.e. imposing assumptions on $\mu$ rather than $\nu$.
\begin{proposition}
Suppose $\mu:\mathcal{C}\rightarrow \mathbb{R}_0^+$ is a fuzzy subadditive measure.
 Let $f, g \in \mathscr{F}(\mu)$ be
such that $f=g$ $\mu$-a.e.. 
If $f\in G_{RL}(\nu,\mu)$, then $g\in G_{RL}(\nu,\mu)$ and 
\[\int_{S}^{*}f\, d(\nu, \mu)= \int_{S}^{*}g\, d(\nu, \mu).\]
\end{proposition}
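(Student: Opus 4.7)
The plan is to reduce the statement to an equality of the level-set functions $u_{f,\mu}^S$ and $u_{g,\mu}^S$, after which $\nu$-Riemann--Lebesgue integrability and the value of the integral transfer automatically.

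First I would pick, using the definition of $\mu$-a.e.\ equality, a set $E\in\mathcal{C}$ with $\mu(E)=0$ such that $f(s)=g(s)$ for every $s\in S\setminus E$. Since $f,g\in\mathscr{F}(\mu)$, the level sets $E_f^{\alpha}$ and $E_g^{\alpha}$ lie in $\mathcal{C}$ for every $\alpha\geq 0$, and the agreement on $S\setminus E$ gives the set-theoretic inclusions
\begin{equation*}
E_f^{\alpha}\subseteq E_g^{\alpha}\cup E\qquad\text{and}\qquad E_g^{\alpha}\subseteq E_f^{\alpha}\cup E.
\end{equation*}

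Next I would exploit that $\mu$ is fuzzy (hence monotone) and subadditive to deduce
\begin{equation*}
\mu(E_f^{\alpha})\leq \mu(E_g^{\alpha}\cup E)\leq \mu(E_g^{\alpha})+\mu(E)=\mu(E_g^{\alpha}),
\end{equation*}
and symmetrically $\mu(E_g^{\alpha})\leq \mu(E_f^{\alpha})$. Therefore $\mu(E_f^{\alpha})=\mu(E_g^{\alpha})$ for every $\alpha\geq 0$, which means that the associated functions on $\mathbb{R}_0^+$ coincide pointwise:
\begin{equation*}
u_{f,\mu}^{S}(\alpha)=u_{g,\mu}^{S}(\alpha),\qquad\forall\,\alpha\geq 0.
\end{equation*}

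Finally, since $f\in G_{RL}(\nu,\mu)$ means by Definition \ref{def-CRL} that $u_{f,\mu}^{S}$ is $\nu$-RL integrable on $\mathbb{R}_0^+$, the pointwise identity above shows that $u_{g,\mu}^{S}$ is the same function and hence is itself $\nu$-RL integrable with identical integral. Translating back through Definition \ref{def-CRL} gives $g\in G_{RL}(\nu,\mu)$ together with $\int_S^{*}f\,d(\nu,\mu)=\int_S^{*}g\,d(\nu,\mu)$.

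The only genuinely delicate point is the step where the subadditivity and monotonicity of $\mu$ are combined with $\mu(E)=0$ to absorb the exceptional set; everything else is a direct unwinding of the definitions. Note that, in contrast to Theorem \ref{0.a.e.}, no assumption on $\nu$ (such as $\nu(\{0\})=0$) is needed here, because the equality of the $u$-functions holds for all $\alpha\geq 0$ rather than only $\nu$-almost everywhere.
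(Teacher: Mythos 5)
Your proof is correct and follows essentially the same route as the paper: the same inclusions $E_f^{\alpha}\subseteq E_g^{\alpha}\cup E$ and $E_g^{\alpha}\subseteq E_f^{\alpha}\cup E$, the same use of monotonicity plus subadditivity to absorb the null set, and the same conclusion that $u_{f,\mu}^{S}=u_{g,\mu}^{S}$ pointwise. Your write-up is in fact slightly more careful than the paper's in working with the exceptional set from the definition of $\mu$-a.e.\ rather than asserting directly that $\{s:f(s)\neq g(s)\}$ has measure zero.
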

\begin{proof}
Let $B= \{s\in S; f(s)\neq g(s)\}$. 
Then $\mu(B)=0$ and $f(s)= g(s)$ for every  $s\in S\setminus B.$
We observe that for every $\alpha \geq 0$, 
\[ E_{f}^{\alpha}\subseteq E_{g}^{\alpha}\cup B, \qquad  E_{g}^{\alpha}\subseteq E_{f}^{\alpha}\cup B,\]
Since $\mu$ is monotone and subadditive, it follows that
for every  $ \alpha \geq 0$ it is
$\mu(E_{f}^{\alpha})= \mu(E_{g}^{\alpha})$ and 
this implies that 
$u_{f, \mu}^{S}(\alpha)= u_{g, \mu}^{S}(\alpha)$ 
and the conclusion holds.
\end{proof}
%===============================
\begin{proposition}\label{Tmon}
Let $\mu:\mathcal{C}\rightarrow \mathbb{R}_0^+$ be a fuzzy measure and 
$f \in \mathcal{F}(\mu)$. If $f $  is $G_{RL} (\nu,\mu)$ integrable  on the sets
$A, B\in \mathcal{C}$, with $A\subseteq B$, then
\begin{equation*}
  \int_{A}^{*}f\, d(\nu, \mu)\leq \int_{B}^{*}f\, d(\nu, \mu).
\end{equation*}
\end{proposition}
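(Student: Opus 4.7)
The plan is to unpack Definition \ref{def-CRL} and reduce the claim to a monotonicity statement for the Riemann--Lebesgue integral with respect to $\nu$ applied to the two integrands $u_{f,\mu}^{A}$ and $u_{f,\mu}^{B}$. First, from the hypothesis $A\subseteq B$ we get $E_{f}^{\alpha}\cap A\subseteq E_{f}^{\alpha}\cap B$ for every $\alpha\geq 0$. Since $\mu$ is assumed fuzzy (in particular monotone), this yields the pointwise inequality
\[
u_{f,\mu}^{A}(\alpha)=\mu(E_{f}^{\alpha}\cap A)\leq \mu(E_{f}^{\alpha}\cap B)=u_{f,\mu}^{B}(\alpha),\qquad \alpha\geq 0.
\]

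Both functions are $\nu$-RL integrable on $\mathbb{R}_{0}^{+}$ by hypothesis, so by Definition \ref{def-CRL} it suffices to show that
\[
{\scriptstyle(RL)}\int_{[0,\infty)} u_{f,\mu}^{A}\,d\nu \;\leq\; {\scriptstyle(RL)}\int_{[0,\infty)} u_{f,\mu}^{B}\,d\nu.
\]
The natural route is to invoke monotonicity of the Riemann--Lebesgue integral for non-negative integrands, a property recorded for this setting in \cite{ccgis}. If one wants a self-contained argument from Definition \ref{def-int}, fix $\varepsilon>0$ and choose a common partition $\Pi_{\varepsilon}\in\mathscr{P}(\mathbb{R}_{0}^{+})$ finer than those produced by the integrability of $u_{f,\mu}^{A}$ and $u_{f,\mu}^{B}$, ensuring that both are bounded on each non-$\nu$-null cell. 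For any $\Pi=(E_{n})\geq \Pi_{\varepsilon}$ and any choice of sample points $\alpha_{n}\in E_{n}$, the pointwise inequality together with $\nu(E_{n})\geq 0$ gives
\[
\sum_{n=0}^{+\infty} u_{f,\mu}^{A}(\alpha_{n})\,\nu(E_{n}) \;\leq\; \sum_{n=0}^{+\infty} u_{f,\mu}^{B}(\alpha_{n})\,\nu(E_{n}).
\]
Using the $\varepsilon$-approximation in Definition \ref{def-int} on both sides, the two RL integrals lie within $\varepsilon$ of their respective sums, so the displayed inequality propagates up to an error of $2\varepsilon$; letting $\varepsilon\to 0$ yields the desired comparison, and hence $\int_{A}^{*}f\,d(\nu,\mu)\leq \int_{B}^{*}f\,d(\nu,\mu)$.

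The main obstacle is exactly this monotonicity transfer from sums to the non-additive RL integral: the limit is taken along refinements rather than through a dominated/monotone convergence mechanism, so one must be careful that the approximating partition can be chosen simultaneously for both integrands. This is handled by taking a common refinement of the two $\varepsilon$-partitions, after which the argument is routine. No hypothesis beyond $\mu$ fuzzy and the stated joint RL integrability is needed; in particular, no finiteness of variation of $\nu$ is used in this step.
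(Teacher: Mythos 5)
Your proof is correct and follows essentially the same route as the paper's: the set inclusion $E_f^{\alpha}\cap A\subseteq E_f^{\alpha}\cap B$, monotonicity of $\mu$ to get $u_{f,\mu}^{A}\leq u_{f,\mu}^{B}$ pointwise, and then monotonicity of the Riemann--Lebesgue integral (the paper simply cites \cite[Theorem 6]{ccgis} for this last step). Your additional self-contained verification of that monotonicity via a common refinement of the two $\varepsilon$-partitions is sound, but not something the paper spells out.
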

\begin{proof}
Since $A\subseteq B$, we have $E_{f}^{\alpha}\cap A\subseteq E_{f}^{\alpha}\cap B$, for every $\alpha \geq 0.$\\
By the monotonicity of 
$\mu$  it follows that $\mu(E_{f}^{\alpha}\cap A)\leq \mu(E_{f}^{\alpha}\cap B)$ for every $\alpha \geq 0.$
Applying \cite[Theorem 6]{ccgis}, it results
\begin{align*}
  \int_{A}^{*}f\, d(\nu, \mu)=
   {\scriptstyle (RL)}\int_{S}u_{f,\mu}^{A}\, d\nu\leq 
  {\scriptstyle (RL)}\int_{S}u_{f, \mu}^{B}\, d\nu=
   \int_{B}^{*}f\, d(\nu, \mu).
\end{align*}
\end{proof}
%================================
The monotonicity of the set-function $\mu$ allows also to obtain monotonicity results between  integrals and 
  integrands (Theorem \ref{Tmf}) and between integrals and set-functions  (Theorems \ref{Tmm} and \ref{Tmn}).
\begin{theorem} \label{Tmf}
Let $\mu:\mathcal{C}\rightarrow \mathbb{R}_0^+$ be a fuzzy measure and
 $f_{1}, f_{2}\in \mathscr{F} (\mu)$ such that $f_{1}\leq f_{2}$. 
 If $f_{1}, f_{2} \in  G_{RL}(\nu,\mu, A)$, for some  $A\in \mathcal{C}$, then
\begin{equation*}\label{1}
  \int_{A}^{*}f_{1}\, d(\nu, \mu)\leq 
  \int_{A}^{*}f_{2}\, d(\nu, \mu).
\end{equation*}
\end{theorem}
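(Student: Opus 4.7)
The plan is to reduce the inequality to a pointwise inequality between the auxiliary functions $u_{f_1,\mu}^A$ and $u_{f_2,\mu}^A$, and then invoke the monotonicity of the Riemann--Lebesgue integral, exactly as in the proof of Proposition \ref{Tmon}.

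First I would fix $\alpha \geq 0$ and show that $E_{f_1}^{\alpha} \subseteq E_{f_2}^{\alpha}$. Indeed, if $s \in E_{f_1}^{\alpha}$ then $f_1(s) \geq \alpha$, and since $f_1 \leq f_2$ pointwise we get $f_2(s) \geq f_1(s) \geq \alpha$, so $s \in E_{f_2}^{\alpha}$. Intersecting with $A \in \mathcal{C}$ preserves the inclusion, giving
\[
E_{f_1}^{\alpha} \cap A \;\subseteq\; E_{f_2}^{\alpha} \cap A, \qquad \forall \alpha \geq 0.
\]

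Next, using the monotonicity of the fuzzy measure $\mu$, the above inclusion yields
\[
u_{f_1,\mu}^{A}(\alpha) \;=\; \mu(E_{f_1}^{\alpha} \cap A) \;\leq\; \mu(E_{f_2}^{\alpha} \cap A) \;=\; u_{f_2,\mu}^{A}(\alpha)
\]
for every $\alpha \geq 0$. Since both $f_1, f_2 \in G_{RL}(\nu,\mu,A)$, the functions $u_{f_1,\mu}^{A}$ and $u_{f_2,\mu}^{A}$ are $\nu$-RL integrable on $\mathbb{R}_0^+$, and the pointwise inequality lets me apply \cite[Theorem 6]{ccgis} to conclude
\[
\int_{A}^{*} f_1\, d(\nu,\mu) \;=\; {\scriptstyle(RL)}\int_{S} u_{f_1,\mu}^{A}\, d\nu \;\leq\; {\scriptstyle(RL)}\int_{S} u_{f_2,\mu}^{A}\, d\nu \;=\; \int_{A}^{*} f_2\, d(\nu,\mu).
\]

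There is no real obstacle here: the argument is a direct transcription of Proposition \ref{Tmon}, with the inclusion $A \subseteq B$ replaced by the pointwise inequality $f_1 \leq f_2$. The only thing to verify carefully is that both auxiliary functions belong to the class for which the cited monotonicity theorem applies, which is guaranteed by the hypothesis that both $f_1$ and $f_2$ lie in $G_{RL}(\nu,\mu,A)$.
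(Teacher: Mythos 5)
Your proposal is correct and follows exactly the paper's own argument: the inclusion $E_{f_1}^{\alpha}\subseteq E_{f_2}^{\alpha}$, monotonicity of $\mu$ to obtain $u_{f_1,\mu}^{A}\leq u_{f_2,\mu}^{A}$ pointwise, and then the monotonicity of the Riemann--Lebesgue integral from \cite[Theorem 6]{ccgis}. No differences worth noting.
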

\begin{proof}
We observe that $E_{f_{1}}^{\alpha}\subseteq E_{f_{2}}^{\alpha}$, for every $\alpha\geq 0.$ 
Then, since $\mu$ is monotone, we have $\mu(E_{f_{1}}^{\alpha}\cap A)\leq \mu(E_{f_{2}}^{\alpha}\cap A)$ for every $ \alpha\geq 0.$ \\
According to \cite[Theorem 6]{ccgis}, it follows
\begin{equation*}
  \int_{A}^{*}f_{1}\, d(\nu, \mu)=
   {\scriptstyle (RL)}\int_{S}\mu(E_{f_{1}}^{\alpha}\cap A)\, d\nu\leq 
    {\scriptstyle (RL)}\int_{S}\mu(E_{f_{2}}^{\alpha}\cap A)\, d\nu= 
   \int_{A}^{*}f_{2}\, d(\nu, \mu).
\end{equation*}
\end{proof}

\begin{remark} Theorem \ref{Tmf} also works in the following hyphotesis: $\mu$ is a complete finitely additive measure and $f_{1}\leq f_{2}$ $\mu$-ae.
\end{remark}
\begin{theorem}\label{Tmm}
Let $\mu_{1}, \mu_{2}:\mathcal{C}\rightarrow \mathbb{R}_0^+$ 
%with $\mu_{1}(\emptyset)= \mu_{2}(\emptyset)= 0$
 be such that 
$\mu_{1}\leq \mu_{2}$ setwise, (namely 
 $\mu_{1}(E)\leq \mu_{2}(E),$
  for every $ E\in \mathcal{C}$) and a function 
  $f \in \mathscr{F} (\mu_i)$, $i=1,2$.
  Let $A\in \mathcal{C}$ be such that 
$f\in G_{RL}(\nu,\mu_{i}, A)$, for  $i=1,2$, 
then
\begin{equation*}\label{1}
  \int_{A}^{*}fd(\nu, \mu_{1})\leq \int_{A}^{*}fd(\nu, \mu_{2}).
\end{equation*}
\begin{proof}
By hypothesis, for every $\alpha\geq 0$ we have
$\mu_{1}(E_{f}^{\alpha}\cap A)\leq \mu_{2}(E_{f}^{\alpha}\cap A).$\\ By \cite[Theorem 7]{ccgis}, it follows
\begin{eqnarray*}
  \int_{A}^{*}fd(\nu, \mu_{1}) &=&
   {\scriptstyle (RL)} \int_{S}\mu_{1}(E_{f}^{\alpha}\cap A) \, d\nu
   \leq \\
   &\leq& 
  {\scriptstyle (RL)}\int_{S}\mu_{2}(E_{f}^{\alpha}\cap A)d\nu=
    \int_{A}^{*}fd(\nu, \mu_{2}),
\end{eqnarray*}
which finishes the proof.
\end{proof}
\end{theorem}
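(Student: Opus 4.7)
The plan is to reduce the inequality for the generalized decomposition integral to a pointwise inequality between the two decomposition profiles $u^A_{f,\mu_1}$ and $u^A_{f,\mu_2}$, and then invoke a monotonicity result for the ordinary Riemann--Lebesgue integral with respect to the (same) set function $\nu$, already available from \cite{ccgis}.

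First I would fix $\alpha \geq 0$. The key observation is that the superlevel set $E_f^\alpha \cap A$ does not involve the measures $\mu_i$, so it is the \emph{same} measurable set for $i=1,2$. Applying the hypothesis $\mu_1 \leq \mu_2$ setwise to this common set yields
$$u^A_{f,\mu_1}(\alpha) = \mu_1(E_f^\alpha \cap A) \leq \mu_2(E_f^\alpha \cap A) = u^A_{f,\mu_2}(\alpha)$$
for every $\alpha \geq 0$, that is, a pointwise domination of the two integrands on $\mathbb{R}_0^+$.

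Second, the hypothesis $f \in G_{RL}(\nu, \mu_i, A)$ for $i=1,2$ is precisely the statement that both $u^A_{f,\mu_1}$ and $u^A_{f,\mu_2}$ are $\nu$-RL integrable on $\mathbb{R}_0^+$. Hence I can apply the monotonicity of the RL integral with respect to the integrand (the same property from \cite{ccgis} already exploited in the proof of Theorem \ref{Tmf}) to conclude
$$\int_A^* f \, d(\nu, \mu_1) = {\scriptstyle(RL)}\!\int_{S} u^A_{f,\mu_1} \, d\nu \leq {\scriptstyle(RL)}\!\int_{S} u^A_{f,\mu_2} \, d\nu = \int_A^* f \, d(\nu, \mu_2).$$

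I do not anticipate any real obstacle: the setwise inequality $\mu_1 \leq \mu_2$ transfers transparently to pointwise domination of the decomposition profiles $u^A_{f,\mu_i}$, and the remainder of the argument is a one-line appeal to monotonicity of the RL integral. The only potentially delicate point, namely establishing RL integrability of each $u^A_{f,\mu_i}$ separately, is handled by hypothesis, so no additional conditions on $\mu_1, \mu_2$ (such as subadditivity or boundedness) are needed beyond what is stated.
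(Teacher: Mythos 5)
Your proposal is correct and follows essentially the same route as the paper: both reduce the claim to the pointwise inequality $u^A_{f,\mu_1}(\alpha)\leq u^A_{f,\mu_2}(\alpha)$ coming from $\mu_1\leq\mu_2$ applied to the common superlevel set $E_f^\alpha\cap A$, and then invoke the monotonicity of the Riemann--Lebesgue integral with respect to the integrand from \cite{ccgis}, with integrability of both profiles guaranteed by hypothesis. (If anything, your explicit appeal to monotonicity in the \emph{integrand} is the more precisely targeted citation than the paper's reference, since here $\nu$ is fixed and only the integrands differ.)
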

\begin{theorem}\label{Tmn}
Let $\nu_{i}, :\mathcal{E}\rightarrow \mathbb{R}_0^+$, $i=1,2$
 be such that 
 $\nu_{1}\leq \nu_{2}$ setwise in $ \mathcal{E}$ and 
 $f \in \mathscr{F}(\mu)$. 
 If there is $A\in \mathcal{C}$ such that
$f\in G_{RL}(\nu_{1},\mu, A)\cap G_{RL}(\nu_{2},\mu, A)$,
then
\begin{equation*}
  \int_{A}^{*}f \, d(\nu_{1},\mu)
  \leq \int_{A}^{*}f\, d(\nu_{2},\mu).
\end{equation*}
\end{theorem}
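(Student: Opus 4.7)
The plan is to mimic the structure of the proof of Theorem \ref{Tmm}, only now reading the monotonicity statement of \cite[Theorem 7]{ccgis} in the other slot: fixing the integrand and varying the set function against which the Riemann--Lebesgue integral is computed.

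First I would unfold the definition of the generalized decomposition integral. For each $i=1,2$, the assumption $f \in G_{RL}(\nu_{i},\mu, A)$ means that the same function
\[
u_{f, \mu}^{A}(\alpha) = \mu(E_{f}^{\alpha} \cap A), \qquad \alpha \geq 0,
\]
is $\nu_{i}$-Riemann--Lebesgue integrable on $\mathbb{R}_{0}^{+}$, and
\[
\int_{A}^{*} f \, d(\nu_{i},\mu) = {\scriptstyle (RL)}\int_{S} u_{f,\mu}^{A}(\alpha)\, d\nu_{i}(\alpha).
\]
The crucial observation is that the integrand $u_{f,\mu}^{A}$ does not depend on $\nu_{i}$, so we are comparing two RL integrals of one and the same non-negative function against two setwise-comparable set functions.

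Next I would apply the monotonicity of the RL integral in the set-function variable, namely \cite[Theorem 7]{ccgis}, which is exactly the tool already used in Theorem \ref{Tmm}. Since $\nu_{1}(E) \leq \nu_{2}(E)$ for every $E \in \mathcal{E}$ and $u_{f,\mu}^{A} \geq 0$, that theorem yields
\[
{\scriptstyle (RL)}\int_{S} u_{f,\mu}^{A}(\alpha)\, d\nu_{1}(\alpha) \leq {\scriptstyle (RL)}\int_{S} u_{f,\mu}^{A}(\alpha)\, d\nu_{2}(\alpha),
\]
and rewriting each side via the definition of $\int_{A}^{*} f\, d(\nu_{i},\mu)$ gives the conclusion.

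I do not expect a real obstacle here: the hypothesis $f \in G_{RL}(\nu_{1},\mu, A) \cap G_{RL}(\nu_{2},\mu, A)$ is precisely what makes both sides well-defined, so no integrability has to be established along the way, and the setwise inequality $\nu_{1} \leq \nu_{2}$ on $\mathcal{E}$ transfers directly to the partial sums $\sum_{n} u_{f,\mu}^{A}(s_{n})\nu_{i}(E_{n})$ appearing in Definition \ref{def-int}, which is what \cite[Theorem 7]{ccgis} encapsulates. The only point worth checking is that the hypotheses of that cited theorem fit our situation (non-negative integrand and setwise comparability of the two set functions vanishing on $\emptyset$), both of which hold by (H) and the non-negativity of $\mu$.
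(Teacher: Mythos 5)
Your proposal matches the paper's own proof essentially verbatim: the paper also unfolds the definition to reduce the claim to comparing ${\scriptstyle (RL)}\int \mu(E_{f}^{\alpha}\cap A)\, d\nu_{1}$ with ${\scriptstyle (RL)}\int \mu(E_{f}^{\alpha}\cap A)\, d\nu_{2}$ and then invokes \cite[Theorem 7]{ccgis}, exactly as you do. The argument is correct and no further commentary is needed.
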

\begin{proof}
Since $f\in G_{RL}(\nu_{1},\mu, A)\cap G_{RL}(\nu_{2},\mu, A)$,
 then by \cite[Theorem 7]{ccgis}, it results
\begin{eqnarray*}
  \int_{A}^{*}f\, d(\nu_{1},\mu) &:=&
   {\scriptstyle (RL)}\int_{S}\mu(E_{f}^{\alpha}\cap A)\, d\nu_{1} 
   \leq \\
   &\leq&
  {\scriptstyle (RL)}\int_{S}\mu(E_{f}^{\alpha}\cap A)d\nu_{2} :=
   \int_{A}^{*}f\, d(\nu_{2},\mu).
\end{eqnarray*}
\end{proof}
%%========================================
%==========================================
\subsection*{C - The additivity with respect to the set-functions}
In this subsection we will prove that the integral is additive with respect to set-functions, proving it separately.
\begin{proposition}
Let $\mu,\nu$  be as in (H) and $f \in \mathscr{F}(\mu)$. 
If there is  $A\in \mathcal{C}$  such that $f\in G_{RL}(\nu,\mu, A)$, then, for every $a, b > 0$, $f\in G_{RL}(a\nu, b\mu, A)$ and
\begin{equation*}\label{1}
  \int_{A}^{*}f\, d(a\nu, b\mu)= ab \cdot \int_{A}^{*}fd(\nu, \mu).
\end{equation*}
\end{proposition}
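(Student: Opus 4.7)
The plan is to unfold both sides using Definition \ref{def-CRL} and reduce the statement to the homogeneity of the Riemann--Lebesgue integral in both the integrand and the underlying set function.

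First I would verify that hypothesis (H) is preserved under $\mu \mapsto b\mu$ and $\nu \mapsto a\nu$ for $a,b>0$: both rescaled set functions remain nonnegative, still vanish on the empty set, and the requirement $\{0\}\in \mathcal{E}$ does not involve the set function at all. Then I would record the pointwise identity
\[
u_{f,b\mu}^{A}(\alpha)=(b\mu)(E_f^\alpha\cap A)=b\,\mu(E_f^\alpha\cap A)=b\,u_{f,\mu}^{A}(\alpha),\qquad \alpha\geq 0,
\]
so that the integrand on the left-hand side of the claimed equality is exactly $b$ times the integrand on the right.

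Second, I would establish the following homogeneity for the Riemann--Lebesgue integral directly from Definition \ref{def-int}: if $g\in RL(\nu, S)$ with integral $J$, then $bg\in RL(a\nu, S)$ with integral $ab\,J$. The defining partial sums satisfy
\[
\sum_{n=0}^{+\infty}(bg)(s_n)(a\nu)(E_n)=ab\sum_{n=0}^{+\infty}g(s_n)\nu(E_n),
\]
and multiplication by the positive scalar $ab$ preserves absolute and unconditional convergence. Given $\varepsilon>0$, the partition $\Pi_{\varepsilon/(ab)}\in\mathscr{P}$ witnessing $\nu$-RL integrability of $g$ up to tolerance $\varepsilon/(ab)$ also witnesses $(a\nu)$-RL integrability of $bg$ up to tolerance $\varepsilon$ on every finer partition.

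Third, I would apply this observation to $g=u_{f,\mu}^{A}$, which lies in $RL(\nu,\mathbb{R}_0^+)$ by the hypothesis $f\in G_{RL}(\nu,\mu,A)$. The identity from the first paragraph then gives $u_{f,b\mu}^{A}=b\,u_{f,\mu}^{A}\in RL(a\nu,\mathbb{R}_0^+)$, hence $f\in G_{RL}(a\nu,b\mu,A)$, and
\[
\int_{A}^{*}f\,d(a\nu,b\mu)={\scriptstyle (RL)}\int_{\mathbb{R}_0^+}u_{f,b\mu}^{A}\,d(a\nu)=ab\cdot{\scriptstyle (RL)}\int_{\mathbb{R}_0^+}u_{f,\mu}^{A}\,d\nu=ab\cdot\int_{A}^{*}f\,d(\nu,\mu).
\]
The argument is essentially bookkeeping; the only point deserving care is making the scaling in the set function explicit from the definition, since the excerpt quotes linearity results for the integrand side of the RL integral but not for the measure side, so one should spell out that a $\sigma$-additive rescaling on $\nu$ factors through the defining partial sums without altering their convergence type.
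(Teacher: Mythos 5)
Your proposal is correct and follows essentially the same route as the paper: both reduce the claim to the identity $u_{f,b\mu}^{A}=b\,u_{f,\mu}^{A}$ and to the homogeneity of the Riemann--Lebesgue integral in the integrand and in the set function, the only difference being that the paper simply cites \cite[Theorem 3]{ccgis} for that homogeneity while you verify it directly from Definition \ref{def-int} via the rescaled partial sums and the partition $\Pi_{\varepsilon/(ab)}$. Your explicit verification is sound (positivity of $a$ ensures $(a\nu)(E_n)>0$ exactly when $\nu(E_n)>0$, so the boundedness requirement is unaffected) and makes the argument self-contained, but it is not a genuinely different proof.
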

\begin{proof}
The conclusion follows from \cite[Theorem 3]{ccgis} and we have
\begin{align*}
  \int_{A}^{*}f \, d(a\nu, b\mu)= 
  {\scriptstyle (RL)}\int_{[0, \infty)}b\mu(E_{f}^{\alpha}\cap A)\, d(a\nu)=\\
  =ab \cdot {\scriptstyle (RL)}\int_{[0, \infty)}\mu(E_{f}^{\alpha}\cap A)\, d\nu= 
  ab \cdot \int_{A}^{*}fd(\nu, \mu).
\end{align*}
\end{proof}
%==========================================
\begin{proposition}\label{T8}
Let $\mu_{i}, \, i=1,2$ be set functions as in (H) and 
$ f \in \mathscr{F}(\mu_i), \, i=1,2$
If there is  $A\in \mathcal{C}$ such that
 $f\in G_{RL}(\nu,\mu_{i}, A)$, $i=1,2$, then $f\in G_{RL}(\nu,\mu_{1}+\mu_{2}, A)$ and
\begin{equation*}
  \int_{A}^{*}f \, d(\nu, \mu_{1}+\mu_{2})= 
  \int_{A}^{*}f \, d(\nu, \mu_{1})+ 
  \int_{A}^{*}f \, d(\nu, \mu_{2}).
\end{equation*}
\end{proposition}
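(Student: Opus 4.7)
The plan is to reduce this to a pointwise identity for the integrands and then invoke the linearity of the Riemann-Lebesgue integral.

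First I would observe that for every $\alpha \geq 0$ and for every $A \in \mathcal{C}$,
\begin{equation*}
u_{f, \mu_{1}+\mu_{2}}^{A}(\alpha) = (\mu_{1}+\mu_{2})(E_{f}^{\alpha}\cap A) = \mu_{1}(E_{f}^{\alpha}\cap A) + \mu_{2}(E_{f}^{\alpha}\cap A) = u_{f,\mu_{1}}^{A}(\alpha) + u_{f,\mu_{2}}^{A}(\alpha),
\end{equation*}
where $\mu_{1}+\mu_{2}$ is the set function defined on $\mathcal{C}$ by $(\mu_{1}+\mu_{2})(E) = \mu_{1}(E)+\mu_{2}(E)$, which still satisfies the assumptions in (H). Hence the integrand associated to $f$ and $\mu_{1}+\mu_{2}$ is simply the pointwise sum of the integrands associated to $f$ and $\mu_{i}$.

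Next, since by hypothesis $u_{f,\mu_{1}}^{A}$ and $u_{f,\mu_{2}}^{A}$ are both in $RL(\nu, \mathbb{R}_0^+)$, I would invoke the linearity of the Riemann-Lebesgue integral on the linear space $RL(\nu, \mathbb{R}_0^+)$ recorded in the Remark after Definition \ref{def-int} (and proved in \cite{ccgis}). This yields that $u_{f,\mu_{1}+\mu_{2}}^{A}$ is $\nu$-RL integrable and
\begin{equation*}
{\scriptstyle(RL)}\int_{[0,\infty)} u_{f,\mu_{1}+\mu_{2}}^{A}\, d\nu = {\scriptstyle(RL)}\int_{[0,\infty)} u_{f,\mu_{1}}^{A}\, d\nu + {\scriptstyle(RL)}\int_{[0,\infty)} u_{f,\mu_{2}}^{A}\, d\nu.
\end{equation*}
Translating back through Definition \ref{def-CRL} gives $f \in G_{RL}(\nu, \mu_{1}+\mu_{2}, A)$ together with the claimed identity.

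The argument is essentially bookkeeping, so there is no real obstacle; the only point that requires care is to check that $\mu_{1}+\mu_{2}$ indeed falls under the standing assumptions (H), i.e.\ it is non-negative, $\mathcal{C}$-defined, and vanishes on $\emptyset$, which is immediate. If one wished to extend this to a linear combination $a\mu_1+b\mu_2$ with $a,b\geq 0$, the same argument combined with the scalar homogeneity from the previous proposition applies verbatim.
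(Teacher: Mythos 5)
Your proposal is correct and follows essentially the same route as the paper: both reduce the claim to the pointwise identity $u_{f,\mu_{1}+\mu_{2}}^{A}=u_{f,\mu_{1}}^{A}+u_{f,\mu_{2}}^{A}$ and then apply the additivity of the Riemann--Lebesgue integral (the paper cites \cite[Theorem 4]{ccgis} for this step, which is the same linearity fact you invoke via the remark after Definition \ref{def-int}). No gaps.
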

\begin{proof}
The $ G_{RL}$ integrability of $f$ with respect to 
$(\nu,\mu_{1}+\mu_{2})$ on $A$ follows from 
\cite[Theorem 4]{ccgis}; moreover 
\begin{eqnarray*}
\int_{A}^{*}f\, d(\nu, \mu_{1}+\mu_{2}) &=&
{\scriptstyle (RL)}\int_{[0,\infty)}(\mu_{1}+\mu_{2})(E_{f}^{\alpha}\cap A)\, d\nu=\\
&=&{\scriptstyle (RL)}\int_{[0,\infty)}\mu_{1}(E_{f}^{\alpha}\cap A)\, d\nu+
{\scriptstyle (RL)}\int_{[0,\infty)}\mu_{2}(E_{f}^{\alpha}\cap A)\, d\nu=\\
 &=&\int_{A}^{*}f\, d(\nu, \mu_{1})+
 \int_{A}^{*}f\, d(\nu, \mu_{2}).
\end{eqnarray*}
\end{proof}
%==================================
\begin{proposition}
Let $\nu_{i}, i=1,2,$ be set functions as in (H) and $f \in \mathscr{F}(\mu)$.  Let $A\in \mathcal{C}$  be such that 
 $f\in  G_{RL}(\nu_{i},\mu, A), \, i=1,2$, then 
 $f\in G_{RL}(\nu_{1}+\nu_{2},\mu, A)$ and
\begin{equation*}
  \int_{A}^{*}f\, d(\nu_{1}+\nu_{2},\mu)= 
  \int_{A}^{*}f\, d(\nu_{1},\mu)+ 
  \int_{A}^{*}f\, d(\nu_{2},\mu).
\end{equation*}
\end{proposition}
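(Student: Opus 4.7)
The plan is to mirror the strategy used in Proposition \ref{T8}, but transferring the additivity from the $\mu$-slot to the $\nu$-slot. The crucial observation is that the integrand of the inner Riemann-Lebesgue integral, namely $u_{f,\mu}^{A}(\alpha)=\mu(E_f^{\alpha}\cap A)$, depends only on $f$, $\mu$, and $A$, and is \emph{independent} of the choice of set function on $\mathcal{E}$. Hence the whole question reduces to an additivity statement for the ordinary Riemann-Lebesgue integral on $\mathbb{R}_0^+$ with respect to a sum of set functions on $\mathcal{E}$.

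First I would verify that $\nu_1+\nu_2$ also satisfies hypothesis (H): it maps $\mathcal{E}$ to $\mathbb{R}_0^+$, vanishes on $\emptyset$, and $\{0\}\in\mathcal{E}$ is inherited directly. Next, from $f\in G_{RL}(\nu_i,\mu,A)$ for $i=1,2$, Definition \ref{def-CRL} gives that the single function $u_{f,\mu}^{A}$ is $\nu_i$-RL integrable on $\mathbb{R}_0^+$ for both $i$. Then I would invoke \cite[Theorem 4]{ccgis}, which was already the workhorse in Proposition \ref{T8}, to conclude that $u_{f,\mu}^{A}$ is $(\nu_1+\nu_2)$-RL integrable and
\[
{\scriptstyle (RL)}\!\!\int_{[0,\infty)} u_{f,\mu}^{A}\,d(\nu_1+\nu_2)
= {\scriptstyle (RL)}\!\!\int_{[0,\infty)} u_{f,\mu}^{A}\,d\nu_1
+ {\scriptstyle (RL)}\!\!\int_{[0,\infty)} u_{f,\mu}^{A}\,d\nu_2.
\]

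Finally I would rewrite each of these three terms using Definition \ref{def-CRL}: the left-hand side becomes $\int_A^* f\, d(\nu_1+\nu_2,\mu)$ and the two right-hand terms become $\int_A^* f\, d(\nu_1,\mu)$ and $\int_A^* f\, d(\nu_2,\mu)$ respectively, yielding both the asserted membership $f\in G_{RL}(\nu_1+\nu_2,\mu,A)$ and the claimed equality. I do not expect a genuine obstacle here: the statement is essentially a ``pull-back'' of the RL-integral's linearity in the measure variable, and the argument is formally identical to the one in Proposition \ref{T8} with the roles of $\nu$ and $\mu$ interchanged. The only minor point to keep in mind is that the partition used in Definition \ref{def-int} lives in $\mathscr{P}(\mathbb{R}_0^+)$ and must consist of sets in $\mathcal{E}$; since a common refinement of the two $\varepsilon$-partitions provided by $\nu_1$- and $\nu_2$-integrability also lies in $\mathscr{P}(\mathbb{R}_0^+)$, this is handled automatically by \cite[Theorem 4]{ccgis}.
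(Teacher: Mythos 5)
Your argument is exactly the one the paper gives: reduce the statement to the additivity of the Riemann--Lebesgue integral in the measure variable for the single function $u_{f,\mu}^{A}$, invoke \cite[Theorem 4]{ccgis}, and translate back via Definition \ref{def-CRL}. The additional remarks on $\nu_1+\nu_2$ satisfying (H) and on common refinements are sensible but do not change the route; the proposal is correct and matches the paper's proof.
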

\begin{proof}
The conclusion holds by \cite[Theorem 4]{ccgis}  and
\begin{eqnarray*}
\int_{A}^{*}f\, d(\nu_{1}+\nu_{2},\mu) &=&
{\scriptstyle (RL)}\int_{[0, \infty)}\mu(E_{f}^{\alpha}\cap A)d(\nu_{1}+\nu_{2})=\\&=&
{\scriptstyle (RL)}\int_{[0, \infty)}\mu(E_{f}^{\alpha}\cap A)d\nu_{1}+
{\scriptstyle (RL)}\int_{[0, \infty)}\mu(E_{f}^{\alpha}\cap A)d\nu_{2}=\\
&=&\int_{A}^{*}f\, d(\nu_{1},\mu) +
\int_{A}^{*}f\, d(\nu_{2},\mu).
\end{eqnarray*}
\end{proof}
%=====================================
Finally, using again monotonicity of $\mu$ we are able to consider the integrability of the supremum
 or the infimum of two integrands asking, a priory, the integrability of all the involved functions.
\begin{theorem}
Let $\mu$ be a fuzzy submodular measure and  $f, g, f\vee g, f\wedge g \in \mathscr{F}(\mu) \cap G_{RL} (\nu,\mu, A)$ for some 
$A\in \mathcal{C}$,
then
\begin{equation*}
  \int_{A}^{*}(f\vee g)\, d(\nu, \mu)+
   \int_{A}^{*}(f\wedge g) \, d(\nu, \mu)\leq 
   \int_{A}^{*}f\, d(\nu, \mu)+
   \int_{A}^{*}g\, d(\nu, \mu).
\end{equation*}
\end{theorem}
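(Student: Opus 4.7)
The plan is to reduce the inequality to a pointwise inequality between the auxiliary functions $u_{f,\mu}^{A}$, $u_{g,\mu}^{A}$, $u_{f\vee g,\mu}^{A}$, $u_{f\wedge g,\mu}^{A}$ on $\mathbb{R}_0^+$, and then invoke monotonicity and additivity of the $\nu$-Riemann--Lebesgue integral (Theorems 4 and 6 of \cite{ccgis}).

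First, for every $\alpha\geq 0$ I would note the set identities
\[
E_{f\vee g}^{\alpha}\cap A=(E_{f}^{\alpha}\cap A)\cup(E_{g}^{\alpha}\cap A),\qquad
E_{f\wedge g}^{\alpha}\cap A=(E_{f}^{\alpha}\cap A)\cap(E_{g}^{\alpha}\cap A),
\]
which follow at once from the definition of the pointwise supremum/infimum. Since $\mu$ is submodular, applying submodularity to the pair $E_{f}^{\alpha}\cap A$, $E_{g}^{\alpha}\cap A$ then yields, for every $\alpha\geq 0$,
\[
u_{f\vee g,\mu}^{A}(\alpha)+u_{f\wedge g,\mu}^{A}(\alpha)\leq u_{f,\mu}^{A}(\alpha)+u_{g,\mu}^{A}(\alpha).
\]

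Next I would lift this pointwise inequality to an integral inequality. By hypothesis, each of $f,g,f\vee g,f\wedge g$ lies in $G_{RL}(\nu,\mu,A)$, so each of the four functions $u_{\,\cdot\,,\mu}^{A}$ is $\nu$-RL integrable on $\mathbb{R}_0^+$. Additivity of the RL integral (\cite[Theorem 4]{ccgis}) ensures that $u_{f\vee g,\mu}^{A}+u_{f\wedge g,\mu}^{A}$ and $u_{f,\mu}^{A}+u_{g,\mu}^{A}$ are RL integrable with integral equal to the sum of the individual integrals. Monotonicity of the RL integral (\cite[Theorem 6]{ccgis}) then converts the pointwise inequality above into
\[
{\scriptstyle (RL)}\!\int_{[0,\infty)}\!\!u_{f\vee g,\mu}^{A}\,d\nu+{\scriptstyle (RL)}\!\int_{[0,\infty)}\!\!u_{f\wedge g,\mu}^{A}\,d\nu\leq{\scriptstyle (RL)}\!\int_{[0,\infty)}\!\!u_{f,\mu}^{A}\,d\nu+{\scriptstyle (RL)}\!\int_{[0,\infty)}\!\!u_{g,\mu}^{A}\,d\nu,
\]
which, unwinding Definition \ref{def-CRL}, is precisely the claimed inequality.

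The only delicate point is verifying that monotonicity of the scalar RL integral applies on a literally pointwise inequality between four functions, rather than inside a subtracted quantity; but this is exactly what the combination of \cite[Theorems 4 and 6]{ccgis} gives, once all four integrands are known to be RL integrable. The submodularity of $\mu$ is used in an entirely local way (at each $\alpha$), so no uniformity is required, and the argument is independent of $\nu$ being of finite variation.
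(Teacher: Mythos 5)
Your proof is correct and takes essentially the same route as the paper's: the level-set relations $E_{f\vee g}^{\alpha}=E_{f}^{\alpha}\cup E_{g}^{\alpha}$ and $E_{f\wedge g}^{\alpha}\subseteq E_{f}^{\alpha}\cap E_{g}^{\alpha}$ combined with submodularity of $\mu$ yield the pointwise inequality between the functions $u_{\cdot,\mu}^{A}$, which is then lifted to the integrals via additivity and monotonicity of the RL integral. The only cosmetic difference is that you assert the $\wedge$-relation as a set equality (which is indeed valid for the pointwise minimum) and fold the two estimates into a single pointwise step, whereas the paper uses the inclusion plus monotonicity of $\mu$ and splits the chain into two inequalities.
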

\begin{proof}
We observe that $E_{f\vee g}^{\alpha}= E_{f}^{\alpha}\cup E_{g}^{\alpha}$
and $E_{f\wedge g}^{\alpha}\subset E_{f}^{\alpha}\cap E_{g}^{\alpha}$.
According to \cite[Theorems 3 and 6]{ccgis}, we have
\begin{eqnarray*}
&&\int_{A}^{*}(f\vee g)\, d(\nu, \mu)+ 
\int_{A}^{*}(f\wedge g)\, d(\nu, \mu)=\\ 
&&={\scriptstyle (RL)}\int_{[0, \infty)}\mu(E_{f\vee g}^{\alpha}
\cap A) \, d\nu+ 
{\scriptstyle (RL)}\int_{[0, \infty)}\mu(E_{f\wedge g}\cap A)\, d\nu\leq \\
&&\leq {\scriptstyle (RL)}\int_{[0, \infty)}\mu( (E_{f}^{\alpha}\cup E_{g}^{\alpha})\cap A)\, d\nu+
{\scriptstyle (RL)}\int_{[0, \infty)}\mu(E_{f}^{\alpha}\cap E_{g}^{\alpha}\cap A)\, d\nu \leq \\
&& \leq {\scriptstyle (RL)}\int_{[0, \infty)}\mu( E_{f}^{\alpha}\cap A)\, d\nu+ 
{\scriptstyle (RL)}\int_{[0, \infty)}\mu(E_{g}^{\alpha}\cap A)\, d\nu=
\\&& = \int_{A}^{*}f\, d(\nu,\mu)+
\int_{A}^{*}g \, d(\nu,\mu).
\end{eqnarray*}
\end{proof}
%====================================
For what concernes the additivity with respect to the sets where we integrate, 
only a partial result is obtained for additive measures.
\begin{theorem}\label{T11}
Let $\mu:\mathcal{C}\rightarrow \mathbb{R}_0^+$ be a finitely additive measure, 
$A, B\in\mathcal{C}$ with  $A\cap B=\emptyset$ and $f \in \mathscr{F}(\mu)$.
 If $f\in  G_{RL}(\nu,\mu, A)\cap  G_{RL}(\nu,\mu, B)\cap  G_{RL}(\nu,\mu,  A\cup B)$, then
\begin{equation*} \label{1}
  \int_{A\cup B}^{*}f\, d(\nu, \mu)= 
  \int_{A}^{*}f\, d(\nu, \mu)+ 
  \int_{B}^{*}f\, d(\nu, \mu).
\end{equation*}
\end{theorem}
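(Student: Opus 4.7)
The plan is to reduce the statement to a pointwise identity for the functions $u_{f,\mu}^A$, $u_{f,\mu}^B$, $u_{f,\mu}^{A\cup B}$ of Definition \ref{def-CRL}, and then invoke the linearity of the Riemann--Lebesgue integral already used throughout the section (namely \cite[Theorem 4]{ccgis}). This mirrors the strategy used in Proposition \ref{T8}, where additivity in $\mu$ was obtained by splitting the integrand pointwise and then applying the same cited linearity result.

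First, I would observe that since $A\cap B=\emptyset$, for every $\alpha\geq 0$ the sets $E_{f}^{\alpha}\cap A$ and $E_{f}^{\alpha}\cap B$ are disjoint and their union is $E_{f}^{\alpha}\cap(A\cup B)$. Because $\mu$ is finitely additive, this yields the pointwise identity
\begin{equation*}
u_{f,\mu}^{A\cup B}(\alpha)=\mu(E_{f}^{\alpha}\cap(A\cup B))=\mu(E_{f}^{\alpha}\cap A)+\mu(E_{f}^{\alpha}\cap B)=u_{f,\mu}^{A}(\alpha)+u_{f,\mu}^{B}(\alpha),
\end{equation*}
valid for all $\alpha\geq 0$.

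Next, by hypothesis each of $u_{f,\mu}^{A}$, $u_{f,\mu}^{B}$, $u_{f,\mu}^{A\cup B}$ is Riemann--Lebesgue integrable with respect to $\nu$ on $\mathbb{R}_0^+$. Applying \cite[Theorem 4]{ccgis} to the decomposition above gives
\begin{equation*}
{\scriptstyle (RL)}\int_{[0,\infty)} u_{f,\mu}^{A\cup B}\, d\nu
={\scriptstyle (RL)}\int_{[0,\infty)} u_{f,\mu}^{A}\, d\nu
+{\scriptstyle (RL)}\int_{[0,\infty)} u_{f,\mu}^{B}\, d\nu,
\end{equation*}
and unpacking the three terms via Definition \ref{def-CRL} delivers the claimed equality.

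I do not anticipate a serious obstacle: the only subtlety is that additivity of RL integrals is stated in \cite[Theorem 4]{ccgis} for the sum of two integrable functions, and here we are using it to split the integral of a function known a priori to be integrable; but this is exactly the form in which the cited result applies once both summands are themselves known to be RL-integrable, which is built into the hypothesis $f\in G_{RL}(\nu,\mu,A)\cap G_{RL}(\nu,\mu,B)$. Hence no measurability or boundedness argument on $u_{f,\mu}^{A\cup B}$ is needed beyond what the hypothesis already provides.
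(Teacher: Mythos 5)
Your proposal is correct and follows essentially the same route as the paper: the finite additivity of $\mu$ gives the pointwise identity $u_{f,\mu}^{A\cup B}=u_{f,\mu}^{A}+u_{f,\mu}^{B}$, and the additivity of the Riemann--Lebesgue integral with respect to the integrand (the paper cites \cite[Theorem 3]{ccgis} here, the same linearity property you invoke) finishes the argument. Your explicit remark on why the linearity result applies to a function already known to be integrable is a harmless elaboration the paper leaves implicit.
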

\begin{proof}
Applying the additivity of $\mu$ and \cite[Theorem 3]{ccgis}, we get
\begin{eqnarray*}
  \int_{A\cup B}^{*}f\, d(\nu, \mu) &=& 
{\scriptstyle (RL)}\int_{[0,\infty)}\mu(E_{f}^{\alpha}\cap (A\cup B))\, d\nu=
  {\scriptstyle (RL)}\int_{[0,\infty)}\mu(E_{f}^{\alpha}\cap A)\, d\nu+\\
  &+&{\scriptstyle (RL)}\int_{[0,\infty)}\mu(E_{f}^{\alpha}\cap B)\, d\nu=
  \int_{A}^{*}fd(\nu, \mu)+\int_{B}^{*}f\, d(\nu, \mu).
\end{eqnarray*}
\end{proof}

While for integrands, in general the integral of a sum is not the sum of integrals. 
As an example we can consider two scalar functions that are not comonotonic, 
$\nu=\lambda$  is the Lebesgue measure and we consider the Choquet integral.
\begin{theorem}\label{T12}
Suppose $\mu:\mathcal{C}\rightarrow \mathbb{R}_0^+$ is a superadditive fuzzy measure.
Let $f, g \in \mathscr{F}(\mu)$ be  such that
$f, g, f+g \in G_{RL}(\nu,\mu)$. Then
\begin{equation*} \label{1}
  \int_{S}^{*}(f+g)d(\nu, \mu)\geq 
  \int_{S}^{*}fd(\nu, \mu)+ \int_{S}^{*}gd(\nu, \mu).
\end{equation*}
\end{theorem}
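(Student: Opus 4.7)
The strategy is to reduce the inequality to a pointwise comparison of the level-set functions $u_{f,\mu}^{S}$, $u_{g,\mu}^{S}$, and $u_{f+g,\mu}^{S}$ defined in \eqref{u-f}, and then integrate using the order- and linearity-properties of the Riemann--Lebesgue integral established in \cite{ccgis}.

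First, I would observe that since $f,g\geq 0$, for every $\alpha\geq 0$ both inclusions
\[
E_f^{\alpha}\subseteq E_{f+g}^{\alpha}, \qquad E_g^{\alpha}\subseteq E_{f+g}^{\alpha}
\]
hold, so $E_f^{\alpha}\cup E_g^{\alpha}\subseteq E_{f+g}^{\alpha}$. Using the monotonicity of $\mu$ (since $\mu$ is fuzzy) and the superadditivity hypothesis,
\[
\mu(E_{f+g}^{\alpha})\;\geq\;\mu(E_f^{\alpha}\cup E_g^{\alpha})\;\geq\;\mu(E_f^{\alpha})+\mu(E_g^{\alpha}),
\]
which, in the notation of \eqref{u-f}, amounts to
\[
u_{f+g,\mu}^{S}(\alpha)\;\geq\;u_{f,\mu}^{S}(\alpha)+u_{g,\mu}^{S}(\alpha) \qquad \forall\,\alpha\geq 0.
\]

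Next, I would exploit the assumption $f,g,f+g\in G_{RL}(\nu,\mu)$, which guarantees that the three level-set functions $u_{f,\mu}^{S}$, $u_{g,\mu}^{S}$ and $u_{f+g,\mu}^{S}$ are all $\nu$-RL integrable on $\mathbb{R}_0^+$. Since the space of RL-integrable functions is linear, $u_{f,\mu}^{S}+u_{g,\mu}^{S}$ is RL-integrable as well, with integral equal to the sum of the integrals (this is the additivity part of \cite[Theorem 4]{ccgis}). Applying the monotonicity of the RL integral \cite[Theorem 6]{ccgis} to the above pointwise inequality, I obtain
\begin{eqnarray*}
\int_{S}^{*}(f+g)\,d(\nu,\mu)
&=& {\scriptstyle (RL)}\!\int_{[0,\infty)} u_{f+g,\mu}^{S}(\alpha)\,d\nu(\alpha)\\
&\geq& {\scriptstyle (RL)}\!\int_{[0,\infty)} \bigl(u_{f,\mu}^{S}+u_{g,\mu}^{S}\bigr)(\alpha)\,d\nu(\alpha)\\
&=& \int_{S}^{*}f\,d(\nu,\mu)+\int_{S}^{*}g\,d(\nu,\mu),
\end{eqnarray*}
which is the claimed inequality.

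The only delicate point is the passage $\mu(E_f^{\alpha}\cup E_g^{\alpha})\geq\mu(E_f^{\alpha})+\mu(E_g^{\alpha})$, since the two level sets need not be disjoint. Reading the definition of superadditive as stated in Definition 2.1(vi) (\emph{without} a disjointness restriction, in parallel with the definition of subadditive (v)), the step is immediate; if one prefers to read superadditivity in the disjoint sense only, the same conclusion may be salvaged by writing $E_f^{\alpha}\cup E_g^{\alpha}$ as the disjoint union $E_f^{\alpha}\sqcup (E_g^{\alpha}\setminus E_f^{\alpha})$ and combining superadditivity on disjoint sets with monotonicity on the smaller piece, at the cost of a slightly weaker pointwise bound. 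This is the single place where the full strength of the hypothesis on $\mu$ is used; all other steps are structural, relying only on $f,g\geq 0$ and on the linearity/monotonicity of the underlying Riemann--Lebesgue integral.
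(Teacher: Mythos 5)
Your proof follows the paper's own argument step for step: the paper likewise derives the pointwise inequality $u_{f+g,\mu}^{S}(\alpha)\geq u_{f,\mu}^{S}(\alpha)+u_{g,\mu}^{S}(\alpha)$ from the inclusion $E_{f}^{\alpha}\cup E_{g}^{\alpha}\subseteq E_{f+g}^{\alpha}$, the monotonicity of $\mu$, and superadditivity applied directly to the (possibly overlapping) sets $E_{f}^{\alpha}$ and $E_{g}^{\alpha}$, and then concludes via the monotonicity and homogeneity/additivity properties of the Riemann--Lebesgue integral (\cite[Theorems 6 and 3]{ccgis}). So in substance you have reconstructed the intended proof, and the ``delicate point'' you isolate is exactly the step the paper performs without comment.

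That point is, however, a genuine gap rather than a matter of taste, and your proposed salvage does not close it. Read literally, the paper's definition of \emph{superadditive} (item (vi) in Section 2) carries no disjointness clause, but then taking $B=C$ forces $\mu\equiv 0$ for a nonnegative set function, so the only meaningful reading is the disjoint one. Under that reading the decomposition $E_{f}^{\alpha}\cup E_{g}^{\alpha}=E_{f}^{\alpha}\sqcup(E_{g}^{\alpha}\setminus E_{f}^{\alpha})$ gives $\mu(E_{f}^{\alpha}\cup E_{g}^{\alpha})\geq \mu(E_{f}^{\alpha})+\mu(E_{g}^{\alpha}\setminus E_{f}^{\alpha})$, and monotonicity bounds $\mu(E_{g}^{\alpha}\setminus E_{f}^{\alpha})$ from \emph{above} by $\mu(E_{g}^{\alpha})$, which is the wrong direction: the ``slightly weaker pointwise bound'' you mention is not sufficient to conclude. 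Indeed the needed inequality can fail: on $S=\{1,2,3\}$ let $\mu$ be monotone with $\mu(\{1\})=\mu(\{2\})=\mu(\{3\})=\mu(\{2,3\})=0$ and $\mu(\{1,2\})=\mu(\{1,3\})=\mu(S)=1$; this $\mu$ is superadditive on disjoint sets, yet for $f=\chi_{\{1,2\}}$, $g=\chi_{\{1,3\}}$ and $\alpha=1$ one gets $u_{f+g,\mu}^{S}(1)=\mu(S)=1<2=u_{f,\mu}^{S}(1)+u_{g,\mu}^{S}(1)$, and with $\nu$ the Lebesgue measure (so that the integral reduces to Choquet's) the conclusion of the theorem itself fails, since $1<1+1$. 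Hence the argument --- yours and the paper's alike --- is valid only under the non-disjoint reading of superadditivity (which trivializes $\mu$) and cannot be repaired by the disjoint-union trick; you were right to flag this step, but you should flag it as an error to be fixed in the hypotheses rather than as a removable technicality.
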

\begin{proof}
  For every $\alpha\geq 0$, 
  $E_{f}^{\alpha}\cup E_{g}^{\alpha}\subset E_{f+g}^{\alpha}.$ 
  Since $\mu$ is monotone, then 
  $\mu(E_{f}^{\alpha}\cup E_{g}^{\alpha})\leq 
  \mu(E_{f+g}^{\alpha})$, for every $ \alpha> 0.$ 
  Applying superadditivity of $\mu$, we have
  \begin{equation*}
  u_{f+g, \mu}^{S}(\alpha)= 
  \mu(E_{f+g}^{\alpha})\geq \mu(E_{f}^{\alpha})+ \mu(E_{g}^{\alpha})=
   u_{f, \mu}^{S}(\alpha)+ u_{g, \mu}^{S}(\alpha),
  \end{equation*}
  for every $\alpha\geq 0$.
Now, the conclusion follows by   \cite[Theorems 6 and 3-3.c)]{ccgis}.
\end{proof}
%===========================================
Finally we consider an integration by substitution.
Let $T \neq \emptyset$, $\varphi: S\rightarrow T$ be a function and   
$\mathcal{A}=\{E\subset T; \varphi^{-1}(E)\in \mathcal{C}\}$.
Let $\mu\varphi^{-1}$ be  the set-function 
\[ \mu\varphi^{-1}:\mathcal{A}\rightarrow \mathbb{R}_0^+,
\,\, \mbox{ defined for every  } 
E\in \mathcal{A}\,\,
\mbox{   by   } \,\,
(\mu\varphi^{-1})(E)= \mu(\varphi^{-1}(E)).\]
It is known that $\mathcal{A}$ is a $\sigma$-algebra of subsets of $T$ and $(\mu\varphi^{-1})(\emptyset)= 0.$

\begin{theorem} (Transformation Rule)
  Let $T$ be a nonvoid set, $\varphi: S\rightarrow T$ a function,
   let $\mathcal{A}$ and $\mu\varphi^{-1}$ be defined as above and
consider a function $g:T\rightarrow \mathbb{R}_0^+$.
Then $g\in  G_{RL}(\nu,\mu\varphi^{-1}, T)$ if and only if 
$g\circ \varphi\in  G_{RL}(\nu,\mu, S)$.
 In this case,
 \begin{equation*}\label{2}
  \int_{T}^{*}g\, d(\nu, \mu\varphi^{-1})= \int_{S}^{*}(g\circ \varphi)\, d(\nu, \mu).
\end{equation*}
\end{theorem}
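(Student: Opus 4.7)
The plan is to reduce the whole statement to a single elementary preimage identity and let the Riemann--Lebesgue theory on $\mathbb{R}_0^+$ do the rest. Concretely, for every $\alpha\geq 0$ and any function $\varphi:S\to T$, one has the set equality
\[
\varphi^{-1}\bigl(\{t\in T: g(t)\geq \alpha\}\bigr) \;=\; \{s\in S: (g\circ\varphi)(s)\geq \alpha\},
\]
that is, $\varphi^{-1}(E_g^{\alpha})=E_{g\circ\varphi}^{\alpha}$. This is purely set-theoretic and requires no structure on $\varphi$ beyond being a function.

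First I would verify the measurability compatibility needed so that both sides of the equivalence even make sense. By definition, $g\in\mathscr{F}(\mu\varphi^{-1})$ means $g^{-1}(B)\in\mathcal{A}$ for every Borel set $B\subset \mathbb{R}_0^+$, and by the very definition of $\mathcal{A}$ this is equivalent to $\varphi^{-1}(g^{-1}(B))=(g\circ\varphi)^{-1}(B)\in\mathcal{C}$, i.e. $g\circ\varphi\in\mathscr{F}(\mu)$. So the two spaces of admissible integrands correspond under composition with $\varphi$.

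Next I would compute the auxiliary upper-level-set functions that define the generalized decomposition integral via \eqref{u-f}. Using $\varphi^{-1}(T)=S$ and the preimage identity above,
\[
u^{T}_{g,\mu\varphi^{-1}}(\alpha) = (\mu\varphi^{-1})(E_g^{\alpha}\cap T) = \mu\bigl(\varphi^{-1}(E_g^{\alpha})\bigr) = \mu(E_{g\circ\varphi}^{\alpha}) = u^{S}_{g\circ\varphi,\mu}(\alpha),
\]
for every $\alpha\geq 0$. Thus the two functions on $\mathbb{R}_0^+$ whose $\nu$-RL-integrability encodes membership in $G_{RL}(\nu,\mu\varphi^{-1},T)$ and $G_{RL}(\nu,\mu,S)$ respectively are literally the same function. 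Invoking Definition \ref{def-CRL} directly then yields both the equivalence of integrabilities and, simultaneously, the equality of the two integrals.

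I do not expect any real obstacle in this proof: once one recognizes the problem as a pushforward statement, the theorem collapses to the identity $\varphi^{-1}(E_g^{\alpha})=E_{g\circ\varphi}^{\alpha}$ and a line of bookkeeping. The only point worth a moment's care is the measurability step, which could a priori fail if $\mathcal{A}$ were not taken as the largest $\sigma$-algebra making $\varphi$ measurable; but since $\mathcal{A}$ is defined exactly that way in the paragraph preceding the theorem, no extra hypothesis is needed.
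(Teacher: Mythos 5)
Your proposal is correct and follows essentially the same route as the paper: the key step in both is the preimage identity $\varphi^{-1}(E_g^{\alpha})=E_{g\circ\varphi}^{\alpha}$, from which the level-set functions $u^{T}_{g,\mu\varphi^{-1}}$ and $u^{S}_{g\circ\varphi,\mu}$ coincide and the two RL-integrals are the same by definition. Your additional check of the measurability compatibility via the definition of $\mathcal{A}$ is a detail the paper leaves implicit, but it does not change the argument.
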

\begin{proof}
For every $\alpha \geq 0$, it holds
$\varphi^{-1}(E_{g}^{\alpha})= E_{g\circ \varphi}^{\alpha}$, which implies
 $\mu\varphi^{-1}(E_{g}^{\alpha})= \mu(E_{g\circ \varphi}^{\alpha})$.
Now, this leads to the integration by substitution.
\end{proof}
\section*{Conclusion}
A type of generalized decomposition integral based on the Riemann-Lebesgue integral is introduced for
 real-valued functions with respect to two set functions. Several classical properties of this extension are discussed,
  including monotonicity with respect to the set, the function, and the set functions; homogeneity; 
  additivity concerning the set and the set functions; and a transformation rule.
\\
For future research, we plan to investigate additional properties of this generalized  integral, such as various inequalities, 
convergence results, and comparisons with other types of known integrals.

%================================
\section*{Acknowledgments}
{\scriptsize
This research has been accomplished within the
 UMI Group TAA - “Approximation Theory and Applications”,
 the G.N.AM.P.A. group of INDAM and the University of Perugia.
 The last author is a member of the ``Centro Interdipartimentale Lamberto Cesari'' of the University of Perugia.
The last author have been supported within the PRIN 2022 PNRR: 
”RETINA: REmote sensing daTa INversion with multivariate functional modeling
 for essential climAte variables characterization”, funded by the 
 European Union under the Italian National Recovery and Resilience Plan (NRRP) 
 of NextGenerationEU, under the Italian Ministry of University and Research
  (Project Code: P20229SH29, CUP: J53D23015950001).
}
%================================
\normalsize

\Addresses

\end{document}